\numberwithin{equation}{section}
\newtheorem{theorem}{Theorem}[section]
\newtheorem{lemma}[theorem]{Lemma}
\newtheorem{corollary}[theorem]{Corollary}
\theoremstyle{definiton}
\newtheorem*{remarks}{Remarks}
\newtheorem*{note}{Note}
\newtheorem*{remark}{Remark}
\newcommand{\F}{{\mathbb F}}
\newcommand{\I}{{\mathbb I}}
\newcommand{\Z}{{\mathbb Z}}
\newcommand{\h}{{\mathscr H}}
\newcommand{\f}{{\mathcal F}}
\newcommand{\p}{{\mathcal P}}
\newcommand{\g}{{\mathbb H}}
\newcommand{\pp} {{\mathbb P}}
\DeclareMathOperator{\tr}{tr}
\DeclareMathOperator{\sgn}{sgn}
\DeclareMathOperator{\USp}{USp}
\DeclareMathOperator{\Zeta} {Z}
\DeclareMathOperator{\e} {e}
\DeclareMathOperator{\E} {\acute{e}}
\DeclareMathOperator{\A} {\grave{a}}
\title[Traces of High Powers]{Traces of High Powers of the Frobenius Class in the Moduli Space of Hyperelliptic Curves}
\author{Iakovos Jake Chinis}
\date{\today}
\begin{document}

\begin{abstract}
The Zeta function of a curve $C$ over a finite field may be expressed in terms of the characteristic polynomial of a unitary matrix $\Theta_C$. Following the work of Rudnick \cite{Rud10}, we compute the expected value of $\tr(\Theta_C^n)$ over the moduli space of hyperelliptic curves of genus $g$, over a fixed finite field $\F_q$, in the limit of large genus. As an application, we compute the expected value of the number of points on $C$ in $\F_{q^n}$ as the genus tends to infinity. We also look at biases in both expected values for small values of $n$.
\end{abstract}

\maketitle

\section{Introduction}

Let $C$ be a smooth projective curve of genus $g \geq 1$ defined over a fixed finite field $\F_q$ of odd cardinality $q$. If we let $\#C(\F_{q^n})$ denote the number of points on $C$ in finite extensions $\F_{q^n}$ of degree $n$ of $\F_q$, then the Zeta function associated to the curve $C$ is defined by
\begin{align}
\label{Zeta_1}
\Zeta_C(u):=\exp \Big( \sum_{n=1}^{\infty} \frac{\#C(\F_{q^n})}{n} u^n \Big), |u| < \frac{1}{q}.
\end{align}
It is known that $\Zeta_C(u)$ is a rational function in $u$ of the form
\begin{align*}
\Zeta_C(u)=\frac{P_C(u)}{(1-u)(1-qu)},
\end{align*}
where $P_C(u) \in \Z[u]$ is a polynomial of degree $2g$, with $P_C(0)=1$, satisfying the functional equation
\begin{align}
\label{FunctionalEquation_1}
P_C(u)=(qu^2)^gP_C(\frac{1}{qu}).
\end{align}
It was proven by Weil \cite{Weil} that the zeros of $P_C(u)$ all lie on the circle $|u|=1/q^\frac{1}{2}$. Hence,
\begin{align*}
P_C(u)=\prod_{j=1}^{2g}(1-q^\frac{1}{2} \e^{i\theta_j(C)}u),
\end{align*}
for some angles $\theta_j(C)$, $1\leq j \leq 2g$, and
\begin{align}
\label{Zeta_2}
\Zeta_C(u) = \exp \Big( \sum_{n=1}^{\infty} \frac{\#C(\F_{q^n})}{n} u^n \Big)
		   = \frac{\prod_{j=1}^{2g}(1-q^\frac{1}{2} \e^{i\theta_j(C)}u)}{(1-u)(1-qu)}.
\end{align}  
		   
Now, we may define a unitary symplectic matrix $\Theta_C \in \USp(2g)$ by
\begin{align*}
\Theta_{C_{jk}}:= \left\{\def\arraystretch{1.2}%
\begin{array}{@{}c@{\quad}l@{}}
\e^{i\theta_j(C)} & \mbox{if $k=j$}\\
0 & \mbox{otherwise,}\\
\end{array}\right.
\end{align*}
for $1 \leq j,k \leq 2g$. Then it is clear that the Zeta function associated to $C$ can be expressed in terms of the characteristic polynomial of $\Theta_C$
\begin{align*}
\Zeta_C(u)=\frac{\det(\I-u\sqrt{q}\Theta_C)}{(1-u)(1-qu)},
\end{align*}
with $\Theta_C$ unique up to conjugacy. We call the conjugacy class of $\Theta_C$ the \textit{unitarized Frobenius class of $C$}.

Let $\h_g$ be the moduli space of hyperelliptic curves of genus $g$ over $\F_q$; i.e., the set of hyperelliptic curves given by the affine equation
\begin{align*}
C_Q: y^2=Q(x),
\end{align*}
where $Q\in \F_q[x]$ is any squarefree polynomial of degree $2g+1$ or $2g+2$. In this model, the point at infinity is not smooth, but we may consider the smooth portion of $C_Q$ and account for the point at infinity separately; in the smooth model, the point at infinity will be replaced by 0, 1, or 2 points and we get that the number of points at infinity is
\begin{align}
\label{infinity}
\left\{\def\arraystretch{1.2}%
\begin{array}{@{}r@{\quad}l@{}}
0 & \mbox{if $\deg(Q)$ is even and $\sgn(Q)\neq \square$}\\
1 & \mbox{if $\deg(Q)$ is odd}\\
2 & \mbox{if $\deg(Q)$ is even and $\sgn(Q)=\square$.}\\
\end{array}\right.
\end{align}
Note the relation between equations (\ref{infinity}) and (\ref{lambda}); namely, the number of points at infinity is $\lambda_Q+1$, with $\lambda_Q$ as in (\ref{lambda}).

\begin{remark}
The smooth model is the closure of $C_Q$, denoted $\overline{C_Q}$, under the map
\begin{align*}
[1,x,x^2,\dots,x^{g-1},y]:C_Q\rightarrow \pp^{g+2}.
\end{align*}
One can show that this closure consists of two affine components: the first is $C_Q$ itself and the second is the curve given by $y^2=x^{2g+2}Q(\frac{1}{x})$. In fact, $C_Q$ is isomorphic to $\overline{C_Q}\bigcap\{x_0\neq 0\}$; we refer the reader to Silverman \cite{Silverman}. 
\end{remark}

For any function $F$ on $\h_g$, we define the expected value of $F$ over $\h_g$ 
\begin{align}
\label{first}
\langle F \rangle_{\h_g}:=\frac{1}{\#\h_g} \cdot \sum_{C_Q \in \h_g} F(C_Q).
\end{align}

In this paper, we study the traces of high powers of the Frobenius class of $C_Q$ over $\h_g$ over a fixed finite field $\F_q$ of odd cardinality $q$ as  $g \rightarrow \infty$. In particular, we concern ourselves with the expected values of $\langle \tr(\Theta_{C_Q}^n) \rangle_{\h_g}$ as $g \rightarrow \infty$ and we compare our work with the Random Matrix results of \cite{RMT}.

From the work of Diaconis and Shahshahani \cite{RMT}, the expected value of the traces of powers over the unitarized symplectic group $\USp(2g)$ is given by
\begin{align}
\label{RMT}
\int_{\USp(2g)}\tr(U^n)du= \left\{\def\arraystretch{1.2}%
\begin{array}{@{}c@{\quad}l@{}}
-\eta_n & \mbox{if $1\leq n \leq 2g$}\\
0 & \mbox{if $n > 2g$,}\\
\end{array}\right.
\end{align}
where 
\[
\eta_n= \left\{\def\arraystretch{1.2}%
\begin{array}{@{}c@{\quad}l@{}}
1 & \mbox{if $n$ is even}\\
0 & \mbox{if $n$ is odd.}\\
\end{array}\right.
\]
We will prove the following theorem and the accompanying corollary:
\begin{theorem}
\label{THM_1}
For $n$ odd,
\begin{align*}
\langle \tr(\Theta_C^n) \rangle_{\h_g}=0,
\end{align*}
and for $n$ even,
\begin{align*}
\langle \tr(\Theta_C^n) \rangle_{\h_g}= 
\frac{1}{q^{\frac{n}{2}}}\cdot\sum_{\substack{{\deg(P)|\frac{n}{2}}\\{\deg(P)\neq 1}}} \frac{\deg(P)}{|P|+1} +O(gq^{\frac{-g}{2}})+
\left\{\def\arraystretch{1.2}%
\begin{array}{@{}l@{\quad}l@{}}
-1 & \mbox{$0 < n < 2g$}\\
-1-\frac{1}{q^2-1} & \mbox{$n = 2g$}\\
O(nq^{\frac{n}{2}-2g}) & \mbox{$2g < n$,}\\
\end{array}\right.
\end{align*}
where the sum is over all monic irreducible polynomials $P \in \F_q[x]$ and where $|P|:=q^{\deg(P)}$.
\end{theorem}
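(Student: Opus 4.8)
The plan is to pass from the trace to a character sum by point counting, and then to exploit a decoupling of the leading coefficient of $Q$. Comparing the two expressions for $\Zeta_C(u)$ in (\ref{Zeta_2}) gives the explicit relation $\#C(\F_{q^n}) = q^n + 1 - q^{n/2}\tr(\Theta_C^n)$, while counting the affine points of $y^2 = Q(x)$ directly yields $\#C(\F_{q^n}) = q^n + S_n(Q) + \rho_n(Q)$, where $\chi_n$ is the quadratic character of $\F_{q^n}$ (with $\chi_n(0)=0$), $S_n(Q):=\sum_{x\in\F_{q^n}}\chi_n(Q(x))$, and $\rho_n(Q)$ is the number of points at infinity over $\F_{q^n}$. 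One subtlety I would flag at the outset: although (\ref{infinity}) records the points at infinity over $\F_q$, over $\F_{q^n}$ the even-degree case splits according to whether $\sgn(Q)$ is a square in $\F_{q^n}$, i.e. according to the sign of $\chi_q(\sgn Q)^n$. Subtracting the two point counts gives
\[
\tr(\Theta_C^n) = -q^{-n/2}\bigl(S_n(Q) + \rho_n(Q) - 1\bigr),
\]
so the whole problem reduces to averaging $S_n(Q)$ and $\rho_n(Q)$ over $\h_g$.

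Next I would factor out the leading coefficient. Every $Q\in\h_g$ is uniquely $Q=c\tilde Q$ with $c=\sgn(Q)\in\F_q^{\times}$ and $\tilde Q$ monic squarefree of the same degree, and $\chi_n(cy)=\chi_q(c)^n\chi_n(y)$ because $\chi_n(c)=\chi_q(c)^n$ for $c\in\F_q^{\times}$. Since $c$ and $\tilde Q$ vary independently, averaging over the $q-1$ choices of $c$ contributes the factor $\tfrac{1}{q-1}\sum_{c}\chi_q(c)^n$, which vanishes for $n$ odd and equals $1$ for $n$ even. For $n$ odd this forces $\langle S_n\rangle_{\h_g}=0$ term-by-term in $x$, and the same averaging makes the even-degree point-at-infinity count symmetric, so $\langle\rho_n(Q)-1\rangle_{\h_g}=0$ as well; together these give $\langle\tr(\Theta_C^n)\rangle_{\h_g}=0$ \emph{exactly}, with no error term. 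This decoupling is the conceptual reason the odd case is clean, and it is why the whole ensemble (rather than just the monic polynomials) is the natural object.

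For $n$ even it remains to evaluate $\langle S_n\rangle$ over monic squarefree $\tilde Q$ of degrees $2g+1$ and $2g+2$, together with $\langle\rho_n(Q)-1\rangle=\tfrac{q}{q+1}$; the latter holds because when $n$ is even every even-degree curve (a proportion $\tfrac{q}{q+1}$ of $\h_g$) acquires a second point at infinity. For $S_n$ I would use the identity obtained by grouping the points of $\F_{q^n}$ by the degree of their minimal polynomial $P$,
\[
S_n(\tilde Q) = \sum_{\deg f = n}\Lambda(f)\,\chi_{\tilde Q}(f) = \sum_{d\mid n} d\sum_{\deg P = d}\chi_{\tilde Q}(P)^{n/d},
\]
and split according to the parity of $n/d$. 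When $n/d$ is even, $\chi_{\tilde Q}(P)^{n/d}=\mathbf 1[P\nmid\tilde Q]$, whose average over squarefree $\tilde Q$ I would compute from the generating function $\tfrac{1-qu^2}{1-qu}=\prod_P(1+u^{\deg P})$ to equal $\tfrac{|P|}{|P|+1}$ up to a controlled correction; summing $d\cdot\tfrac{|P|}{|P|+1}$ over $\deg P = d\mid\tfrac n2$ and invoking the prime polynomial theorem $\sum_{d\mid n/2}d\,\pi_q(d)=q^{n/2}$ produces $q^{n/2}-\sum_{\deg P\mid n/2}\tfrac{\deg P}{|P|+1}$. When $n/d$ is odd, $\chi_{\tilde Q}(P)^{n/d}=\chi_{\tilde Q}(P)$, and quadratic reciprocity turns $\langle\chi_{\tilde Q}(P)\rangle$ into the average of a nontrivial character modulo $P$ over squarefree $\tilde Q$; Weil's bound for $L(u,\chi_P)$ gives square-root cancellation, and these terms should be absorbed into the error $O(gq^{-g/2})$.

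Assembling for $n$ even, $\langle\tr(\Theta_C^n)\rangle=-q^{-n/2}\bigl(\langle S_n\rangle+\tfrac{q}{q+1}\bigr)$, and the key cancellation is that the $\tfrac{q}{q+1}$ coming from the points at infinity is precisely the $\deg P=1$ term $\sum_{\deg P=1}\tfrac{\deg P}{|P|+1}=\tfrac{q}{q+1}$ removed from the sum; this promotes the constraint to $\deg P\neq1$ and leaves $-1+q^{-n/2}\sum_{\deg P\mid n/2,\,\deg P\neq1}\tfrac{\deg P}{|P|+1}$, plus the error. The three regimes in the statement arise from evaluating the squarefree averages exactly rather than asymptotically: extracting the coefficient of $u^{N}$ ($N=2g+1,2g+2$) in the relevant generating functions produces, beyond the leading term, a dual contribution governed by the functional equation (\ref{FunctionalEquation_1}) that is negligible while $n<2g$, contributes the extra $-\tfrac{1}{q^2-1}$ exactly at $n=2g$, and grows like $O(nq^{n/2-2g})$ once $n>2g$. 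I expect the main obstacle to be exactly this last point — controlling the off-diagonal ($n/d$ odd) character sums uniformly in $P$, and above all pinning down the secondary term at and beyond $n=2g$, where only an exact evaluation of the finite-degree squarefree sums, not the $g\to\infty$ heuristic, yields the stated constants.
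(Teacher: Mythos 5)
Your skeleton is the same as the paper's, and the odd-$n$ half of your argument is complete and correct. The point-count identity $\tr(\Theta_C^n)=-q^{-n/2}\bigl(S_n(Q)+\rho_n(Q)-1\bigr)$ is the explicit formula (\ref{trace}) in disguise (one checks $\rho_n(Q)-1=\lambda_Q^n$ and $S_n(Q)=\sum_{\deg f=n}\Lambda(f)\chi_Q(f)$); the factorization $Q=c\tilde Q$ and the average over $c\in\F_q^*$ is exactly how Section \ref{section10} of the paper makes the odd-$n$ average vanish identically and reduces the even-$n$ average to the monic families $\f_{2g+1}\cup\f_{2g+2}$; your even-$n/d$ computation is the paper's squares contribution $\square_n$; and the cancellation of the $\frac{q}{q+1}$ infinity term against the $\deg P=1$ part of the sum is the paper's final assembly step.

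The genuine gap is in the even-$n$ case, and it sits exactly where you flag ``the main obstacle'': the odd-$n/d$ terms (the primes $d=n$ together with the higher odd prime powers) are never estimated, and the one concrete claim you make about them is false. You assert that reciprocity plus Weil's bound for $L(u,\chi_P)$ lets these terms ``be absorbed into the error $O(gq^{-g/2})$''; that holds only for $n\le g+1$ or so, and fails precisely in the regimes that create the theorem's case structure. After sieving, the prime contribution to the degree-$2g+2$ family involves $S(2g;n)$ and $S(2g+2;n)$: these vanish for $n\le 2g$ by (\ref{S(beta;n)=0}), but for $n\ge 2g+1$ they have genuine main terms of size $\pi_q(n)q^{\beta/2}$, giving contributions of order $1$ (e.g.\ $\langle\p_n\rangle_{\f_{2g+2}}=\frac{q^{1/2}}{q-1}+O(q^{-g})$ at $n=2g+1$, and $\eta_n+O(nq^{\frac n2-2g})$ for larger $n$, which is what cancels the $-1$ from the squares when $n>2g$). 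Extracting these requires the duality/functional-equation estimate for $S(\beta;n)$ with $\beta$ \emph{even} (Theorem \ref{thm_beta_even}, the even-$\beta$ analogue of Rudnick's Proposition 9 — needed because the degree-$2g+2$ sieve produces even $\beta$, so Rudnick's odd-$\beta$ estimates do not apply), together with the range-by-range analysis of Section \ref{section5}; coefficientwise Weil bounds $|A_P(\beta)|\le\binom{\deg P-1}{\beta}q^{\beta/2}$ are useless here because of the binomial factor, and Lemma \ref{lem_S_beta} alone dies at $n\approx 2g$. Your closing paragraph describing the three regimes is a correct statement of what must be proved, not a derivation; note also that your attribution of the $-\frac{1}{q^2-1}$ at $n=2g$ to the dual term of the even-degree family is off: the degree-$2g+2$ prime term vanishes identically for $g+1<n<2g+1$, and that constant actually comes from Rudnick's $-\frac{1}{q-1}$ at $n=2g$ for the degree-$2g+1$ family (Theorem \ref{Thm_Rud}), scaled by the proportion $\#\f_{2g+1}/\#(\f_{2g+1}\cup\f_{2g+2})=\frac{1}{q+1}$. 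So to complete the proof you must either import Theorem \ref{Thm_Rud} and prove its degree-$2g+2$ analogue (Theorem \ref{pre_thm}) via the even-$\beta$ duality estimate, or rerun that analysis for both parities of the degree; the tools you list will not produce the $n\ge 2g$ cases.
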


\begin{corollary}
If $n$ is odd, then
\begin{align*}
\langle \tr(\Theta_C^n) \rangle_{\h_g} = \int_{\USp(2g)} \tr(U^n) dU.
\end{align*}
For $n$ even with $3 \log_q(g) < n < 4g-5 \log_q(g)$ and $n \neq 2g$,
\begin{align*}
\langle \tr(\Theta_C^n) \rangle_{\h_g} = \int_{\USp(2g)} \tr(U^n) dU + o(\frac{1}{g}).
\end{align*} 
\end{corollary}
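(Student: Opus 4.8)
The Corollary follows by combining Theorem \ref{THM_1} with the random matrix formula (\ref{RMT}); the whole content lies in estimating the sizes of the explicit main term and the error terms appearing in Theorem \ref{THM_1}.

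I would first dispose of the odd case. When $n$ is odd, Theorem \ref{THM_1} gives $\langle \tr(\Theta_C^n) \rangle_{\h_g} = 0$. On the other side $\eta_n = 0$, so by (\ref{RMT}) we have $\int_{\USp(2g)} \tr(U^n)\,dU = -\eta_n = 0$ for $1 \leq n \leq 2g$ and $=0$ for $n > 2g$; in every case the integral vanishes. Hence the two quantities agree identically, which is the first assertion.

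For the even case the goal is to show that the difference $\langle \tr(\Theta_C^n) \rangle_{\h_g} - \int_{\USp(2g)} \tr(U^n)\,dU$ is $o(1/g)$ throughout the stated range. Write $S_n := q^{-n/2} \sum_{\substack{\deg(P) \mid n/2 \\ \deg(P) \neq 1}} \frac{\deg(P)}{|P| + 1}$ for the explicit main term of Theorem \ref{THM_1}. I would split into the regimes $n < 2g$ and $n > 2g$ (recall $n = 2g$ is excluded). When $3\log_q(g) < n < 2g$, formula (\ref{RMT}) gives integral $-1$, which exactly cancels the $-1$ in Theorem \ref{THM_1}, so the difference equals $S_n + O(g\,q^{-g/2})$. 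When $2g < n < 4g - 5\log_q(g)$, the integral is $0$ and the difference equals $S_n + O(g\,q^{-g/2}) + O(n\,q^{n/2 - 2g})$. It remains to bound each surviving term by $o(1/g)$.

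The crux, and the step I expect to be the main obstacle, is the estimate for $S_n$. Using the prime polynomial theorem in the form $\pi_q(d) \leq q^d/d$ (there are at most $q^d/d$ monic irreducibles of degree $d$, which follows from the identity $\sum_{e \mid d} e\,\pi_q(e) = q^d$), each divisor $d \neq 1$ of $n/2$ contributes $\sum_{\deg P = d} \frac{d}{|P| + 1} \leq \frac{q^d}{d}\cdot\frac{d}{q^d + 1} < 1$, whence $\sum_{\substack{\deg(P)\mid n/2 \\ \deg(P)\neq 1}} \frac{\deg(P)}{|P|+1} < \tau(n/2)$, with $\tau$ the divisor function. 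Since $\tau(m) = O_\epsilon(m^\epsilon)$ for every $\epsilon > 0$ and $n/2 < 2g$, while the hypothesis $n > 3\log_q(g)$ forces $q^{n/2} > g^{3/2}$, we obtain $S_n \leq \tau(n/2)\,q^{-n/2} = O_\epsilon(g^{\epsilon - 3/2})$, which is $o(1/g)$ upon choosing $\epsilon < 1/2$. The two remaining error terms are immediate: $O(g\,q^{-g/2})$ is exponentially small in $g$, and in the regime $n > 2g$ the hypothesis $n < 4g - 5\log_q(g)$ gives $n/2 - 2g < -\tfrac{5}{2}\log_q(g)$, so $q^{n/2 - 2g} < g^{-5/2}$ and hence $n\,q^{n/2 - 2g} < 4g \cdot g^{-5/2} = o(1/g)$. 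Collecting these bounds in both regimes yields the claimed $o(1/g)$, and it becomes clear that the constants $3$ and $5$ in the range are exactly calibrated to push $S_n$ and the $O(n\,q^{n/2-2g})$ term below $o(1/g)$. The value $n = 2g$ must be excluded because there Theorem \ref{THM_1} carries the extra term $-\frac{1}{q^2-1}$, a nonzero constant independent of $g$ that is not absorbed into any $o(1/g)$ error.
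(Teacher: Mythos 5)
Your proposal is correct and takes essentially the same approach as the paper: both deduce the corollary by combining Theorem \ref{THM_1} with the Diaconis--Shahshahani formula (\ref{RMT}) and then checking that each non-main term (the prime sum, $O(gq^{-g/2})$, and $O(nq^{\frac{n}{2}-2g})$) is $o(\frac{1}{g})$ in the stated range, with $n=2g$ excluded precisely because of the residual constant $-\frac{1}{q^2-1}$. The only cosmetic difference is in bounding the prime sum, where you use the divisor bound $\tau(n/2)=O_\epsilon(n^\epsilon)$ together with $q^{-n/2}<g^{-3/2}$, while the paper uses the cruder estimate $O(nq^{-n/2})$ and monotonicity in $n$, obtaining the slightly stronger range $n>(2+\epsilon)\log_q(g)$; both verifications are valid.
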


In \cite{Rud10}, Rudnick considers the mean value of $\tr(\Theta_C^n)$ over a family of hyperlliptic curves given by the affine equation $C: y^2=Q(x)$ where $Q(x)\in \f_{2g+1}$, with
\begin{align*}
\f_{2g+1}:=\{f\in \F_q[x] : \mbox{ $f$ monic, squarefree, and $\deg(f)=2g+1$}\},
\end{align*}
and obtains that
\begin{align*}
\langle \tr (\Theta_C^n) \rangle_{\f_{2g+1}} = 
\eta_n\frac{1}{q^{\frac{n}{2}}}\sum_{\substack{{\deg(P)|\frac{n}{2}}}} \frac{\deg(P)}{|P|+1} 
+O(gq^{-g})+
\left\{\def\arraystretch{1.2}%
\begin{array}{@{}l@{\quad}l@{}}
-\eta_n & \mbox{$0 < n < 2g$}\\
-1-\frac{1}{q-1} & \mbox{$n = 2g$}\\
O(nq^{\frac{n}{2}-2g}) & \mbox{$2g < n$;}\\
\end{array}\right.
\end{align*}
in particular, if $3 \log_q(g) < n < 4g-5 \log_q(g)$ and $n \neq 2g$, then
\begin{align*}
\langle \tr (\Theta_C^n) \rangle_{\f_{2g+1}}=\int_{\USp(2g)} \tr(U^n) dU + o(\frac{1}{g}).
\end{align*}
Rudnick then points out that there is a slight deviation in $\langle \tr (\Theta_C^n) \rangle_{\f_{2g+1}}$ from the Random Matrix Theory results for small values of $n$ and for $n=2g$; namely,
\begin{align*}
\langle \tr (\Theta_C^2) \rangle_{\f_{2g+1}} \sim \int_{\USp(2g)} \tr(U^2) dU + \frac{1}{q+1}
\end{align*}
and
\begin{align*}
\langle \tr (\Theta_C^{2g}) \rangle_{\f_{2g+1}} \sim \int_{\USp(2g)} \tr(U^{2g}) dU - \frac{1}{q-1}.
\end{align*}
By considering the average value of $\tr(\Theta_C^n)$ over $\h_g$, we no longer get a deviation from the RMT results for $n=2$ and the deviation at $n=2g$ diminishes:
\begin{align*}
\langle \tr (\Theta_C^{2}) \rangle_{\h_g} \sim \int_{\USp(2g)} \tr(U^{2}) dU 
\end{align*} 
and
\begin{align*}
\langle \tr (\Theta_C^{2g}) \rangle_{\h_g} \sim \int_{\USp(2g)} \tr(U^{2g}) dU - \frac{1}{q^2-1}.
\end{align*}
Furthermore, our results for odd $n$ are exact and coincide with the RMT results for all values of $g$. At first glance, this may seem counterintuitive as one expects to have an error term, as in the even case and as in \cite{Rud10}. Using another approach, one can quickly verify the first result of Theorem \ref{THM_1} (this is done in section \ref{section10}).\\

Now, we may apply Theorem \ref{THM_1} to compute the average number of points on $C_Q$ in finite extensions $\F_{q^n}$ of $\F_q$ over $\h_g$, denoted $\langle \#C(\F_{q^n}) \rangle_{\h_g}$:

By taking logarithmic derivatives in (\ref{Zeta_2}),
\begin{align*}
\#C_Q(\F_{q^n})&=q^n+1-q^\frac{n}{2}\sum_{j=1}^{2g}\e^{in\theta_j(C_Q)}\\
&=q^n+1-q^\frac{n}{2}\tr(\Theta_{C_Q}^n).
\end{align*}
In fact,
\begin{align*}
\langle \#C_Q(\F_{q^n}) \rangle_{\h_g}&=q^n+1-q^\frac{n}{2} \langle \tr(\Theta_{C_Q}^n) \rangle_{\h_g}\\
&\sim q^n+\eta_nq^\frac{n}{2}+1-\eta_n\sum_{\substack{{\deg(P)|\frac{n}{2}}\\{\deg(P)\neq 1}}}\frac{\deg(P)}{|P|+1}.
\end{align*}
More precisely,
\begin{corollary}
(i)  If $n$ is odd, then 
\begin{align*}
\langle \#C_Q(\F_{q^n}) \rangle_{\h_g}=q^n+1.
\end{align*}
(ii) If $n$ is even, then 
\begin{align*}
\langle \#C_Q(\F_{q^n}) \rangle_{\h_g} \sim q^n+q^\frac{n}{2}+1-\sum_{\substack{{\deg(P)|\frac{n}{2}}\\{\deg(P)\neq 1}}}\frac{\deg(P)}{|P|+1}.
\end{align*}
\end{corollary}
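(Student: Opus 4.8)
The plan is to reduce the corollary to Theorem \ref{THM_1} by way of the exact point-counting identity, so that no new analytic input is required. First I would recall that taking the logarithmic derivative of the Zeta function in (\ref{Zeta_2}) yields, for every individual curve $C_Q \in \h_g$, the pointwise identity
\[
\#C_Q(\F_{q^n}) = q^n + 1 - q^{\frac{n}{2}}\,\tr(\Theta_{C_Q}^n).
\]
Because this holds curve-by-curve, it survives averaging. Since $\langle \cdot \rangle_{\h_g}$ is linear and $q^n+1$ is constant on $\h_g$, I would immediately obtain
\[
\langle \#C_Q(\F_{q^n}) \rangle_{\h_g} = q^n + 1 - q^{\frac{n}{2}}\,\langle \tr(\Theta_{C_Q}^n) \rangle_{\h_g},
\]
which reduces everything to substituting the value of $\langle \tr(\Theta_{C_Q}^n) \rangle_{\h_g}$ computed in Theorem \ref{THM_1}.

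For part (i), I would substitute the odd-$n$ value $\langle \tr(\Theta_C^n)\rangle_{\h_g}=0$. Because that value is \emph{exact}, carrying no error term, the resulting identity $\langle \#C_Q(\F_{q^n})\rangle_{\h_g}=q^n+1$ is an exact equality; this is precisely why part (i) is stated with $=$ while part (ii) uses $\sim$. For part (ii), I would insert the even-$n$ expansion in the principal range $0<n<2g$, where the bracketed contribution of Theorem \ref{THM_1} equals $-1$. Multiplying through by $q^{n/2}$ cancels the factor $q^{-n/2}$ in front of the prime sum and converts the $-1$ into a $+q^{n/2}$ term, giving
\[
\langle \#C_Q(\F_{q^n}) \rangle_{\h_g} = q^n + q^{\frac{n}{2}} + 1 - \sum_{\substack{\deg(P)|\frac{n}{2}\\ \deg(P)\neq 1}} \frac{\deg(P)}{|P|+1} + O\!\left(g\,q^{\frac{n}{2}-\frac{g}{2}}\right).
\]
I would then observe that for $n$ small relative to $g$ the error $O(g\,q^{\frac{n}{2}-\frac{g}{2}})$ tends to $0$ as $g\to\infty$, which is exactly the content of the asymptotic symbol $\sim$ in the statement.

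The only point requiring genuine care—rather than a true obstacle—is the bookkeeping of the error term after multiplication by $q^{n/2}$: one must confirm that $q^{n/2}\cdot O(g\,q^{-g/2}) = O(g\,q^{(n-g)/2})$ remains negligible throughout the intended range of $n$, so that the displayed subleading constants are not swamped. If one additionally wishes to cover $n$ near or beyond $2g$, the alternative bracket values $-1-\tfrac{1}{q^2-1}$ (at $n=2g$) and $O(n\,q^{\frac{n}{2}-2g})$ (for $2g<n$) from Theorem \ref{THM_1} must be propagated through the same computation. Since all the difficult estimation is already contained in Theorem \ref{THM_1}, I expect the proof itself to be a short and direct substitution.
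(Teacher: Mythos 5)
Your proposal is correct and is essentially identical to the paper's own derivation: the paper likewise takes the logarithmic derivative of (\ref{Zeta_2}) to get the pointwise identity $\#C_Q(\F_{q^n})=q^n+1-q^{n/2}\tr(\Theta_{C_Q}^n)$, averages it by linearity, and substitutes Theorem \ref{THM_1}, with the exactness of the odd-$n$ case and the vanishing of $q^{n/2}\cdot O(gq^{-g/2})$ as $g\to\infty$ giving parts (i) and (ii) respectively.
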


Once again, our results for odd $n$ are exact and hold for all values of $g$. Although we continue to get deviations from the RMT results for even $n\geq 4$, our results hold for $n=2$ and our deviations are different from those obtained in \cite{Rud10}.\\

Another approach to computing $\langle \#C_Q(\F_{q^n}) \rangle_{\h_g}$ is the work of Alzahrani \cite{Alz15} who uses the distribution of points on $\h_g$ over $\F_q$ in $\F_{q^n}$. Using these methods, the results of Alzahrani agree with the Corollary above (albeit with a larger error term).\\

Finally, we would like to mention that some of the computations done in sections \ref{section3} through \ref{section8} were done independently by E. Lorenzo, G. Meleleo, and P. Milione in their study of statistics for biquadratic curves; their work is collected in \cite{thesis}.

\section{Background}

In this section, we establish some notation and we introduce the main results of \cite{Rud10}. Since the majority of what follows is based off of the work in \cite{Rud10}, we use the same notation and list important results for the convenience of the reader. We use \cite{Ros02} as a general reference.

Throughout this paper, $\F_q$ is a fixed finite field of odd cardinality $q$, $P$ represents monic irreducible polynomials in $\F_q[x]$, and $Q$ will be used to denote squarefree polynomials of degree $2g+1$ or $2g+2$ with $g\geq 1$. Unless otherwise stated, it is understood that sums and products are over all monic elements in $\F_q[x]$; in the case where a sum involves elements $B\in\F_q[x]$ that are not necessarily monic, we write the sum over $\mbox{$B$ n.n.m.}$.

Given any polynomial $D\in \F_q[x]$ that is not a perfect square, we define the quadratic character $\chi_D$ by the quadratic residue symbol for $\F_q[x]$
\begin{align*}
\chi_D(f):=\Big( \frac{D}{f} \Big),
\end{align*}
where $f$ is any monic polynomial in $\F_q[x]$. 

The Zeta function associated to the hyperelliptic curve $C_Q:y^2=Q(x)$ is then given by
\begin{align*}
\Zeta_{C_Q}(u)=L^*(u,\chi_Q)\zeta_q(u),
\end{align*}
where
\begin{align*}
\zeta_q(u):=\frac{1}{(1-u)(1-qu)}
\end{align*}
is the Zeta function of $\F_q(x)$ and where
\begin{align}
\label{L_1}
L^*(u,\chi_Q)&:=(1-\lambda_Q\cdot u)^{-1}\prod_{P\in \F_q[x]}(1-\chi_Q(P)\cdot u^{\deg(P)})^{-1}\\
\label{L_2}
&=\det(\I-u\sqrt{q}\cdot \Theta_{C_Q}),
\end{align}
with
\begin{align}
\label{lambda}
\lambda_Q:=\left\{\def\arraystretch{1.2}%
\begin{array}{@{}r@{\quad}l@{}}
-1 & \mbox{if $\deg(Q)$ is even and $\sgn(Q)\neq \square$}\\
0 & \mbox{if $\deg(Q)$ is odd}\\
1 & \mbox{if $\deg(Q)$ is even and $\sgn(Q)=\square$,}\\
\end{array}\right.
\end{align}
which relates to the count in equation (\ref{infinity}).

Taking logarithmic derivatives in equations (\ref{L_1}) and (\ref{L_2}), we see that
\begin{align}
\label{trace}
\sum_{j=1}^{2g} \e^{in\theta_j(C_Q)} = \tr(\Theta_{C_Q}^n)=-\frac{\lambda_Q^n}{q^\frac{n}{2}}-\frac{1}{q^\frac{n}{2}}\sum_{\deg(f)=n} \Lambda(f) \chi_Q(f),
\end{align}
where
\begin{align*}
\Lambda(f):=\left\{\def\arraystretch{1.2}%
\begin{array}{@{}c@{\quad}l@{}}
\deg(P) & \mbox{if $f=P^k$}\\
0 & \mbox{otherwise}\\
\end{array}\right.
\end{align*}
is the \textit{von Mangoldt} function.  
  
Let
\begin{align*}
\f_d:=\{f\in\F_q[x]:\mbox{$f$ monic, squarefree, and $\deg(f)=d$}\}
\end{align*}
and let
\begin{align*}
\widehat{\f}_d:=\{f\in\F_q[x]:\mbox{$f$ squarefree and $\deg(f)=d$}\}.
\end{align*}
Then 
\begin{align*}
\#\widehat{\f}_d=(q-1)\#\f_d
\end{align*}
and it is easy to see that (see Lemma 3 of \cite{KurRud}, for example)
\begin{align*}
\#\f_d=\left\{\def\arraystretch{1.2}%
\begin{array}{@{}c@{\quad}l@{}}
(1-\frac{1}{q})q^d, & d\geq 2\\
q, & d=1.\\
\end{array}\right.
\end{align*}

Using these sets of polynomials, every curve in the moduli space of hyperelliptic curves of genus $g$ has a model
$C_Q:y^2=Q(x)$, where $Q\in\widehat{\f}_{2g+1}\bigcup \widehat{\f}_{2g+2}$.

Now, let $\f$ be any family of squarefree polynomials of degree $d$ in $\F_q[x]$. For any function $F$ on $\f$, we define the expected value of $F$ over $\f$
\begin{align*}
\langle F \rangle_\f:=\frac{1}{\#\f}\cdot \sum_{Q\in \f}F(Q).
\end{align*}
In particular,
\begin{align}
\label{ave_gen}
\langle \tr(\Theta_{C_Q}^n) \rangle _{\f}=\frac{1}{\#\f}\cdot \sum_{Q\in \f}\Big( -\frac{\lambda_Q^n}{q^\frac{n}{2}}-\frac{1}{q^\frac{n}{2}}\sum_{\deg(f)=n}\Lambda(f) \chi_Q(f)\Big).
\end{align}
In \cite{Rud10}, Rudnick averages the trace over $\f=\f_{2g+1}$. We begin by considering the average over $\f=\f_{2g+2}$ and then obtain the average over $\h_g$ by combining our results and also considering the contribution of the point at infinity which differs on each component $\widehat{\f}_{2g+1}$, $\widehat{\f}_{2g+2}$. 

Let $\mu$ denote the M\"{o}bius function. Since 
\begin{align*}
\sum_{A^2|Q}\mu(A)=\left\{\def\arraystretch{1.2}%
\begin{array}{@{}c@{\quad}l@{}}
1 & \mbox{if $Q$ is squarefree}\\
0 & \mbox{otherwise,}\\
\end{array}\right.
\end{align*}
we may compute the expected value of $F$ by summing over all elements of degree $d$ in $\F_q[x]$ and sieving out the squarefree terms; namely,
\begin{align}
\label{ave}
\langle   F(Q)\rangle_\f = \frac{1}{\#\f}\sum_{2\alpha+\beta=d}\sum_{\substack{{\deg(B)=\beta}\\\text{$B$ n.n.m.}}}\sum_{\deg(A)=\alpha} \mu(A) F(A^2B).
\end{align}

For all $A,$ $B \in \F_q[x]$,
\begin{align*}
\chi_{A^2B}(f)=\Big(\frac{B}{f}\Big)\cdot \Big(\frac{A}{f}\Big)^2=\left\{\def\arraystretch{1.2}%
\begin{array}{@{}c@{\quad}l@{}}
\Big(\frac{B}{f}\Big) & \mbox{if $(A,f)=1$}\\
0 & \mbox{otherwise.}\\
\end{array}\right.
\end{align*}
With that said, taking $F(Q)=\chi_Q$ in equation (\ref{ave}),
\begin{align*}
\langle \chi_Q(f) \rangle_\f=\frac{1}{\#\f}\cdot \sum_{\substack{{2\alpha+\beta=d}\\{\alpha,\beta\geq 0}}} \sigma(f;\alpha)\sum_{\substack{{\deg(B)=\beta}\\{\text{$B$ n.n.m.}}}}\Big(\frac{B}{f}\Big),
\end{align*}
where
\begin{align*}
\sigma(f;\alpha):=\sum_{\substack{{\deg(A)=\alpha}\\{(A,f)=1}}}\mu(A).\\
\end{align*}

We are now is a position to provide the necessary results from \cite{Rud10}:\\

For any $P\in\F_q[x]$ with $\deg(P)=n$, we define
\begin{align*}
\sigma_n(\alpha):=\sigma(P^k;\alpha)=\sum_{\substack{{\deg(A)=\alpha}\\{(A,P^k)=1}}}\mu(A)=\sum_{\substack{{\deg(A)=\alpha}\\{(A,P)=1}}}\mu(A)=\sigma(P;\alpha).
\end{align*}

\begin{lemma} \cite[Lemma 4]{Rud10}
\label{lem_alpha}
(i) For $n=1$,
\begin{align}
\label{lem_alpha_1}
\mbox{$\sigma_1(0)=1,$ $\sigma_1(\alpha)=1-q$ $\:\forall\alpha\geq 1.$}
\end{align}
(ii) If $n\geq 2$, then
\begin{align}
\label{lem_alpha_2}
\sigma_n(\alpha)=\left\{\def\arraystretch{1.2}%
\begin{array}{@{}c@{\quad}l@{}}
1 & \alpha\equiv 0\mod{n}\\
-q & \alpha\equiv 1\mod{n}\\
0 & \mbox{otherwise.}\\
\end{array}\right.
\end{align}
\end{lemma}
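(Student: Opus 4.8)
The plan is to evaluate $\sigma_n(\alpha) = \sum_{\deg(A) = \alpha, (A,P) = 1} \mu(A)$ by relating it to an unrestricted M\"{o}bius sum via a generating-function (Dirichlet-series) argument in $\F_q[x]$. First I would recall the zeta function of $\F_q[x]$, namely $\zeta_q(s) = \sum_{A \text{ monic}} |A|^{-s} = (1-q^{1-s})^{-1}$, where $|A| = q^{\deg(A)}$, and the basic Euler-product identity $\sum_A \mu(A) |A|^{-s} = \zeta_q(s)^{-1} = 1 - q^{1-s}$. Comparing coefficients of $q^{-\alpha s}$ in $\sum_\alpha \big(\sum_{\deg(A)=\alpha}\mu(A)\big) q^{-\alpha s} = 1 - q \cdot q^{-s}$ immediately gives the well-known fact that $\sum_{\deg(A)=0}\mu(A)=1$, $\sum_{\deg(A)=1}\mu(A) = -q$, and $\sum_{\deg(A)=\alpha}\mu(A)=0$ for $\alpha \geq 2$. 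This is the ingredient I will build on.

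Next I would impose the coprimality condition $(A,P)=1$ by removing the Euler factor at $P$. Concretely, writing $P$ for the fixed irreducible of degree $n$, the restricted series factors as
\begin{align*}
\sum_{(A,P)=1}\mu(A)|A|^{-s} = \big(1-q^{1-s}\big)\Big(1 - \frac{\mu(P)}{|P|^{-s}}\Big)^{-1} = \frac{1 - q^{1-s}}{1 - q^{-ns}},
\end{align*}
since dropping the coprimality restriction amounts to reinstating the local factor $(1+\mu(P)|P|^{-s})^{-1} = (1 - q^{-ns})^{-1}$ at $P$ (here I use $\mu(P) = -1$ and $|P| = q^n$). Expanding $\tfrac{1}{1 - q^{-ns}} = \sum_{j \geq 0} q^{-jns}$ as a geometric series and multiplying by $1 - q^{1-s}$, I read off $\sigma_n(\alpha)$ as the coefficient of $q^{-\alpha s}$. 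The nonzero contributions come only from the exponents $\alpha = jn$ (contributing $+1$) and $\alpha = jn+1$ (contributing $-q$), which is exactly the stated residue pattern in part~(ii); case $n=1$ is handled by the same expansion, where the residue classes collapse, yielding $\sigma_1(0)=1$ and $\sigma_1(\alpha)=1-q$ for all $\alpha\geq 1$.

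I do not anticipate a serious obstacle here, as the statement is essentially a formal coefficient extraction. The only point requiring a little care is the convergence/formal-identity bookkeeping: rather than worrying about a region of convergence, I would treat the equality $\sum_{(A,P)=1}\mu(A)u^{\deg(A)} = (1-qu)(1-u^n)^{-1}$ as an identity of formal power series in $u = q^{-s}$, which is entirely rigorous since each coefficient is a finite sum. An equivalent and perhaps cleaner route, avoiding series altogether, is a direct inclusion–exclusion: write $\sigma_n(\alpha) = \sum_{\deg(A)=\alpha}\mu(A) - \sum_{\deg(A)=\alpha,\, P|A}\mu(A)$, and note that on squarefree $A$ divisible by $P$ one has $\mu(A) = -\mu(A/P)$, so the second sum telescopes into the unrestricted M\"{o}bius sums at degree $\alpha - n$; iterating this recursion against the known values $\sum_{\deg(A)=\beta}\mu(A)\in\{1,-q,0\}$ reproduces the periodic answer. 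I would present whichever of these two arguments is shorter, and verify the boundary case $n=1$ separately since then $P\mid A$ can occur already at $\deg(A)=\alpha$ with the residues $0$ and $1 \bmod 1$ coinciding.
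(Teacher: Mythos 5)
Your proof is correct. The paper itself gives no proof of this lemma --- it is quoted verbatim from \cite[Lemma 4]{Rud10} --- and your generating-function argument, i.e.\ extracting the coefficient of $u^{\alpha}$ from $\sum_{(A,P)=1}\mu(A)\,u^{\deg A} = (1-qu)/(1-u^{n})$ with the $n=1$ collapse handled separately, is essentially the proof given in that cited source (the only blemish is the transcription slip $\bigl(1-\mu(P)/|P|^{-s}\bigr)^{-1}$, which should read $\bigl(1+\mu(P)|P|^{-s}\bigr)^{-1}$; the surrounding identity and conclusion are right).
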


Recall that the Dirichlet L-series associated to $\chi_Q$, denoted $L(u,\chi_Q)$ for $|u|<1/q$, is a polynomial in $u$ of degree at most $\deg(Q)-1$ (see Proposition 4.3 of \cite{Ros02}, for example). In fact, 
\begin{align*}
L(u,\chi_Q):=\prod_{P\in \F_q[x]}(1-\chi_Q(P)\cdot u^{\deg(P)})^{-1}=\sum_{\beta\geq 0}A_Q(\beta)u^\beta,
\end{align*}
where 
\begin{align*}
A_Q(\beta):=\sum_{\deg(B)=\beta}\chi_Q(B)
\end{align*}
and $A_Q(\beta)=0$ for $\beta \geq \deg(Q)$.

Let 
\begin{align*}
S(\beta; n):=\sum_{\deg(P)=n}\sum_{\deg(B)=\beta}\Biggr(\frac{B}{P}\Biggr).
\end{align*}
By the \textit{Law of Quadratic Reciprocity} \cite{Ros02},
\begin{align*}
S(\beta; n)=(-1)^{\frac{q-1}{2}\beta n} \sum_{\deg(P)=n}A_P(\beta)
\end{align*}
\begin{align}
\label{S(beta;n)=0}
\Rightarrow \mbox{$S(\beta;n)=0$ $\forall n \leq \beta.$}
\end{align}

We let $\pi_q(n)$ denote the number of monic irreducible polynomials of degree $n$ in $\F_q[x]$. From the\textit{ Prime Polynomial Theorem} \cite{Ros02},
\begin{align*}
\pi_q(n)&:=\#\{P\in \F_q[x]:\deg(P)=n\}\\
&=\frac{q^n}{n}+O\Big(\frac{q^{\frac{n}{2}}}{n}\Big).
\end{align*}

\begin{lemma} \cite[Proposition 7]{Rud10}
\label{S_n}
(i) $n$ odd, $0\leq \beta \leq n-1$:
\begin{align}
\label{S_n_odd}
S(\beta;n)=q^{\beta-\frac{n-1}{2}}S(n-1-\beta;n)
\end{align}
and 
\begin{align}
\label{S(n-1;n)_odd}
S(n-1;n)=\pi_q(n)q^{\frac{n-1}{2}}.
\end{align}
(ii) $n$ even, $1\leq \beta \leq n-2$:
\begin{align}
\label{S_n_even}
S(\beta;n)=q^{\beta-\frac{n}{2}}\Big(-S(n-1-\beta;n)+(q-1)\sum_{j=0}^{n-\beta-2}S(j;n)\Big)
\end{align}
and
\begin{align}
\label{S(n-1;n)_even}
S(n-1;n)=-\pi_q(n)q^{\frac{n-2}{2}}.
\end{align}
\end{lemma}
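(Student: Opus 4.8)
The plan is to prove Lemma \ref{S_n} by exploiting the functional equation satisfied by the $L$-functions $L(u,\chi_P)$ for irreducible $P$, together with the Law of Quadratic Reciprocity and a careful count at the top coefficient. The central object is $S(\beta;n)=\sum_{\deg(P)=n}\sum_{\deg(B)=\beta}(B/P)$, which by reciprocity equals $(-1)^{\frac{q-1}{2}\beta n}\sum_{\deg(P)=n}A_P(\beta)$, so the whole analysis reduces to understanding the coefficients $A_P(\beta)$ of the Dirichlet $L$-series summed over irreducible $P$ of degree $n$. The key input is that each $L(u,\chi_P)$ is a polynomial of degree at most $\deg(P)-1=n-1$ satisfying a functional equation relating its $\beta$-th coefficient to its $(n-1-\beta)$-th coefficient; the parity of $n$ enters through the sign in that functional equation, which is exactly why the odd and even cases in (i) and (ii) look structurally different.

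**First I would** write down the precise functional equation for $L(u,\chi_P)$. For $P$ monic irreducible of degree $n$, $\chi_P$ is a primitive even or odd character depending on the parity of $n$, and the completed $L$-function satisfies $L(u,\chi_P)=\varepsilon_P\,(qu^2)^{(n-1)/2}L(1/(qu),\chi_P)$ in the odd case (where $n-1$ is even), giving a clean palindromic relation among the $A_P(\beta)$. Translating $A_P(\beta)\leftrightarrow A_P(n-1-\beta)$ into a relation for $S(\beta;n)$ via reciprocity, and tracking the powers of $q$ coming from the $(qu^2)^{(n-1)/2}$ factor, should produce (\ref{S_n_odd}) directly. For $n$ even the functional equation carries an extra sign and the character $\chi_P$ is no longer of the simplest type, so the relation among coefficients picks up the correction term $(q-1)\sum_{j}S(j;n)$ in (\ref{S_n_even}); I would derive this by combining the functional equation with the identity $\sum_{\beta}A_P(\beta)u^\beta=L(u,\chi_P)$ evaluated against the vanishing $A_P(\beta)=0$ for $\beta\geq n$, being careful about whether $\beta=0$ and $\beta=n-1$ are included in the admissible range.

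**The top-coefficient identities** (\ref{S(n-1;n)_odd}) and (\ref{S(n-1;n)_even}) I would handle separately and more explicitly. Here $A_P(n-1)=\sum_{\deg(B)=n-1}\chi_P(B)$, and summing over all irreducible $P$ of degree $n$ one can interchange the order of summation: $S(n-1;n)=\sum_{\deg(B)=n-1}\sum_{\deg(P)=n}(B/P)$. Applying reciprocity to flip $(B/P)$ into $(P/B)$ and then recognizing the inner sum over irreducible $P$ as (essentially) a character sum modulo $B$ lets me evaluate it; the leading term is controlled by the Prime Polynomial Theorem, $\pi_q(n)=q^n/n+O(q^{n/2}/n)$, and the sign $-1$ appearing in the even case (\ref{S(n-1;n)_even}) versus $+1$ in the odd case (\ref{S(n-1;n)_odd}) comes from the functional-equation sign combined with the reciprocity factor $(-1)^{\frac{q-1}{2}(n-1)n}$. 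The powers $q^{(n-1)/2}$ and $q^{(n-2)/2}$ are exactly the half-integer weights from the completed $L$-function, so matching them is a bookkeeping check rather than a genuine difficulty.

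**The hard part will be** the even case (ii), specifically extracting the correction term $(q-1)\sum_{j=0}^{n-\beta-2}S(j;n)$ in (\ref{S_n_even}) with the correct range of summation and sign. Unlike the odd case, where the functional equation is a clean palindrome and the recursion closes immediately, the even case mixes a single reflected term $S(n-1-\beta;n)$ against a cumulative sum of lower-order terms, which strongly suggests that the character $\chi_P$ for even-degree $P$ fails to be primitive (or is imprimitive at infinity), so the naive functional equation must be corrected by a "boundary" contribution accounting for the trivial-character piece. I would therefore spend the most care isolating this imprimitivity, likely by writing $L(u,\chi_P)$ in terms of a primitive $L$-function times a local factor at the place at infinity and then re-expanding; the factor $(q-1)$ and the upper limit $n-\beta-2$ should both fall out of that local factor. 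Everything else—reciprocity, the degree bound $A_P(\beta)=0$ for $\beta\geq n$, and the vanishing $S(\beta;n)=0$ for $n\leq\beta$ already recorded in (\ref{S(beta;n)=0})—I would treat as established tools and invoke directly.
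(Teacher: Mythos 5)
Your overall strategy is the right one, and it is in fact the argument behind this lemma: the paper itself does not prove the statement (it is imported verbatim as Proposition 7 of \cite{Rud10}), and the source proof is exactly the duality you outline --- quadratic reciprocity to convert $S(\beta;n)$ into $(-1)^{\frac{q-1}{2}\beta n}\sum_{\deg(P)=n}A_P(\beta)$, the functional equation of $L(u,\chi_P)$ when $n$ is odd, and the imprimitivity at infinity when $n$ is even. Your diagnosis of the correction term in (\ref{S_n_even}) is correct: for $\deg(P)=n$ even one has $\lambda_P=1$, so $L(u,\chi_P)=(1-u)L^*(u,\chi_P)$ with $\deg L^*=n-2$; writing $A_P^*(\beta)=\sum_{j=0}^{\beta}A_P(j)$ and unfolding the clean duality $A_P^*(\beta)=q^{\beta-\frac{n-2}{2}}A_P^*(n-2-\beta)$ produces both the term $-S(n-1-\beta;n)$ and the cumulative sum $(q-1)\sum_{j=0}^{n-\beta-2}S(j;n)$. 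Two points you should make explicit rather than leave as guesses: the functional equation of $L^*$ carries no epsilon factor (it is the numerator of the zeta function of $y^2=P(x)$, so (\ref{FunctionalEquation_1}) applies with sign $+1$), and the minus sign in (\ref{S_n_even}) comes from the $(1-u)$ factor, not from a sign in the functional equation; also, for the relation to close one needs the reciprocity signs at $\beta$ and at $n-1-\beta$ to agree, which holds because these have the same parity when $n$ is odd and because the sign is identically $+1$ when $n$ is even.

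The genuine gap is your treatment of the top-coefficient identities (\ref{S(n-1;n)_odd}) and (\ref{S(n-1;n)_even}). These are \emph{exact}, and the route you propose --- interchange the sums, flip $(B/P)$ into $(P/B)$, and evaluate the prime sums as character sums modulo $B$ with the Prime Polynomial Theorem controlling the main term --- cannot produce them: that is precisely the proof of the cruder Lemma \ref{lem_S_beta}, in which a main term arises only from $B$ a perfect square and every other $B$ contributes a Weil-bound error $O(\frac{\beta}{n}q^{n/2})$. For $n$ even, $\deg(B)=n-1$ is odd, so \emph{no} $B$ in the sum is a square: your method yields only the bound $S(n-1;n)\ll q^{\frac{n}{2}+n-1}$, with no mechanism to detect the main term $-\pi_q(n)q^{\frac{n-2}{2}}$ (smaller than that bound by a factor of $n$), let alone its sign. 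Even for $n$ odd, the error $O(\frac{n-1}{n}q^{\frac{n}{2}+n-1})$ swamps $\pi_q(n)q^{\frac{n-1}{2}}$ once $n\gg\sqrt{q}$, which is exactly the regime ($n$ of order $g\to\infty$, $q$ fixed) where the paper uses the exact identity (Section \ref{section5}, case (iii)). The correct derivation needs no asymptotics at all: for $n$ odd, set $\beta=n-1$ in (\ref{S_n_odd}) and use the trivial count $S(0;n)=\pi_q(n)$ (only $B=1$ occurs); for $n$ even, the duality for $L^*$ gives, for every single $P$, $A_P(n-1)=A_P^*(n-1)-A_P^*(n-2)=-A_P^*(n-2)=-q^{\frac{n-2}{2}}A_P^*(0)=-q^{\frac{n-2}{2}}$, since $\deg L^*=n-2$ and $A_P^*(0)=1$; summing over the $\pi_q(n)$ primes of degree $n$ --- a count, not an estimate --- gives (\ref{S(n-1;n)_even}). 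So $\pi_q(n)$ enters as a cardinality, and the Prime Polynomial Theorem should appear nowhere in the proof of this lemma.
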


\begin{lemma} \cite[Lemma 8]{Rud10}
\label{lem_S_beta}
If $\beta<n$, then
\begin{align}
\label{S_beta}
S(\beta;n)=\eta_\beta\pi_q(n)q^{\frac{\beta}{2}}+O(\frac{\beta}{n}q^{\frac{n}{2}+\beta}),
\end{align}
where $\eta_\beta=1$ for $\beta$ even and $\eta_\beta=0$ for $\beta$ odd.
\end{lemma}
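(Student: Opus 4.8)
The plan is to expand the definition of $S(\beta;n)$ and to perform the sum over the prime variable first, writing
\[
S(\beta;n)=\sum_{\deg(B)=\beta}\Big(\sum_{\deg(P)=n}\big(\tfrac{B}{P}\big)\Big),
\]
and then, for each fixed monic $B$ of degree $\beta$, to treat the inner sum as a sum of the quadratic character $\big(\tfrac{B}{\cdot}\big)$ over the primes $P$ of degree $n$. The decisive observation is that since $n=\deg(P)>\beta\geq\deg(B)$, no irreducible $P$ of degree $n$ can divide $B$, so $\big(\tfrac{B}{P}\big)=\pm1$ is never zero. I would split the outer sum according to whether $B$ is a perfect square or not, since this is precisely what distinguishes the trivial from a nontrivial character.

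For the perfect squares, write $B=C^{2}$; since $\deg(P)=n>\deg(C)$ we have $(C,P)=1$, so $\big(\tfrac{B}{P}\big)=\big(\tfrac{C}{P}\big)^{2}=1$ for every such $P$, and each square $B$ contributes exactly $\pi_q(n)$ to the inner sum. The monic squares of degree $\beta$ correspond bijectively to the monic $C$ of degree $\beta/2$ when $\beta$ is even, and there are none when $\beta$ is odd, so they number $\eta_\beta q^{\beta/2}$. This produces the main term $\eta_\beta\pi_q(n)q^{\frac{\beta}{2}}$ exactly.

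For the non-squares, let $B_0$ be the squarefree part of $B$; then $\big(\tfrac{B}{P}\big)=\chi_{B_0}(P)$ for $P\nmid B$, and $\chi_{B_0}$ is a nontrivial quadratic character whose $L$-function $L(u,\chi_{B_0})$ is a polynomial of degree at most $\deg(B_0)-1\leq\beta-1$. By the Riemann Hypothesis for curves over finite fields (Weil \cite{Weil}), every inverse root of $L(u,\chi_{B_0})$ has absolute value at most $q^{1/2}$, so the explicit formula $\sum_{\deg(f)=n}\Lambda(f)\chi_{B_0}(f)=-\sum_i\gamma_i^{\,n}$ gives $\big|\sum_{\deg(f)=n}\Lambda(f)\chi_{B_0}(f)\big|\leq(\beta-1)q^{n/2}$. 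Peeling off the proper prime powers of degree $n$ (which number $O(q^{n/2})$ and contribute $O(q^{n/2})$) converts this into $\sum_{\deg(P)=n}\chi_{B_0}(P)=O\big(\tfrac{\beta}{n}q^{n/2}\big)$, uniformly in $B$. Summing over the at most $q^{\beta}$ non-square $B$ of degree $\beta$ then yields the error $O\big(\tfrac{\beta}{n}q^{\frac{n}{2}+\beta}\big)$, and adding the main term completes the estimate.

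The main obstacle is to make the inner character-sum bound genuinely uniform in $B$: because $B$ need not be squarefree, I must reduce $\big(\tfrac{B}{\cdot}\big)$ to the character of its squarefree part, check that the degree (hence the number of inverse roots) of the associated $L$-function is controlled by $\beta$ rather than by $\deg(B)$, and verify that passing from the von Mangoldt-weighted sum to the prime-counting sum costs only $O(q^{n/2})$ independently of $B$. Once this uniformity is secured, the summation over $B$ is immediate. As a consistency check one could instead derive the estimate from the functional equations of Lemma \ref{S_n} together with the boundary values $S(n-1;n)$ and the vanishing $S(\beta;n)=0$ for $\beta\geq n$ from (\ref{S(beta;n)=0}); indeed, the reflection $S(\beta;n)=q^{\beta-\frac{n-1}{2}}S(n-1-\beta;n)$ in the odd case already shows the main term $\eta_\beta\pi_q(n)q^{\beta/2}$ is compatible with the parity factor, since $n-1-\beta$ and $\beta$ share the same parity when $n$ is odd.
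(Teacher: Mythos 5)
Your proposal is correct and follows essentially the same route as the source of this lemma: the paper does not reprove it but cites it as Lemma 8 of \cite{Rud10}, whose proof is exactly your argument --- swap the sums, note that squares $B$ give the main term $\eta_\beta\pi_q(n)q^{\beta/2}$ since $\deg(P)=n>\beta$ forces $(B,P)=1$, and bound the sum over primes for each non-square $B$ by reducing to the squarefree part, invoking Weil's Riemann Hypothesis for the $L$-function of the resulting nontrivial character (degree at most $\beta-1$), and removing the proper prime-power contribution of size $O(q^{n/2})$. Your handling of the uniformity in $B$ (degree of the $L$-function controlled by $\beta$, prime-power correction independent of $B$) is precisely the point that makes the summation over the $q^\beta$ non-squares legitimate, so there is nothing to add.
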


\section{Improved Estimate for $S(\beta;n)$ when $\beta$ is even}
\label{section3}

Initially, we concern ourselves with $\langle \tr(\Theta_{C_Q}^n) \rangle_{\f_{2g+2}}$; in doing so, we need to estimate $S(\beta;n)$ for when $\beta$ is even (see sections \ref{section5} and \ref{section7}). The following theorem makes use of Lemmas \ref{S_n} and \ref{lem_S_beta}; it is the analogous result to Proposition 9 of \cite{Rud10} (since Rudnick considers the average value over $\f_{2g+1}$, estimates for $S(\beta;n)$ in \cite{Rud10} involve $\beta$ odd). Furthermore, this result will allow us to compute $\langle \tr(\Theta_{C_Q}^n) \rangle_{\f_{2g+2}}$ for $n$ near $4g$ (just as Proposition 9 in \cite{Rud10} allows Rudnick to compute $\langle \tr(\Theta_{C_Q}^n) \rangle_{\f_{2g+1}}$ for $n$ near $ 4g$). 
   
\begin{theorem}
\label{thm_beta_even}
If $\beta$ is even, $\beta\neq 0$, and $\beta < n$, then
\begin{align}
\label{beta_even}
S(\beta;n)=\pi_q(n)(q^{\frac{\beta}{2}}-\eta_nq^{\beta-\frac{n}{2}})+O(q^n),
\end{align}
where
\begin{align*}
\eta_n=\left\{\def\arraystretch{1.2}%
\begin{array}{@{}c@{\quad}l@{}}
1 & \mbox{$n$ even}\\
0 & \mbox{$n$ odd.}\\
\end{array}\right.
\end{align*}
\end{theorem}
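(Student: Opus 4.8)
The plan is to deduce the estimate from the functional equations of Lemma \ref{S_n} together with the baseline estimate of Lemma \ref{lem_S_beta} (equation \eqref{S_beta}), treating $n$ odd and $n$ even separately. The guiding observation is that the naive bound \eqref{S_beta} already gives the claim when $\beta$ is small, since then its error term $O(\frac{\beta}{n}q^{n/2+\beta})$ is $O(q^n)$; but this error swamps the main term once $\beta$ exceeds $n/2$. The reflection formulas of Lemma \ref{S_n} are exactly what is needed to trade a large value of $\beta$ for a small reflected argument $n-1-\beta$, where \eqref{S_beta} is sharp, and I expect to apply them uniformly across the whole range.

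For $n$ odd I would apply \eqref{S_n_odd}, namely $S(\beta;n)=q^{\beta-\frac{n-1}{2}}S(n-1-\beta;n)$. Since $\beta$ is even and $n$ is odd, the reflected argument $n-1-\beta$ is even, so \eqref{S_beta} gives $S(n-1-\beta;n)=\pi_q(n)q^{\frac{n-1-\beta}{2}}+O(\frac{n-1-\beta}{n}q^{\frac{3n}{2}-1-\beta})$. Multiplying by $q^{\beta-\frac{n-1}{2}}$ collapses the main term to $\pi_q(n)q^{\beta/2}$ and the error to $O(q^{n-1/2})=O(q^n)$; as $\eta_n=0$ for $n$ odd, this is precisely \eqref{beta_even}.

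For $n$ even I would use \eqref{S_n_even}, $S(\beta;n)=q^{\beta-\frac{n}{2}}\bigl(-S(n-1-\beta;n)+(q-1)\sum_{j=0}^{n-\beta-2}S(j;n)\bigr)$. Here $n-1-\beta$ is odd, so by \eqref{S_beta} the isolated term is $O(\frac{n-1-\beta}{n}q^{\frac{3n}{2}-1-\beta})$, contributing only $O(q^{n-1})$ after the prefactor. The crux is the inner sum, which I would split by the parity of $j$. For even $j$ the leading pieces $\pi_q(n)q^{j/2}$ form a geometric progression summing to $\pi_q(n)\frac{q^{(n-\beta)/2}-1}{q-1}$, and multiplying by $(q-1)q^{\beta-n/2}$ produces exactly $\pi_q(n)(q^{\beta/2}-q^{\beta-n/2})$, the main term of \eqref{beta_even} since $\eta_n=1$. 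For odd $j$ the terms $S(j;n)$ carry no main part, and all error contributions from \eqref{S_beta}—for both parities of $j$—are geometric series dominated by their top term near $j=n-\beta$, so after the prefactor they remain $O(q^n)$.

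The main obstacle I anticipate is the bookkeeping in the even-$n$ sum. I must verify both that the geometric summation of the main terms reproduces the claimed closed form $\pi_q(n)(q^{\beta/2}-q^{\beta-n/2})$ exactly, and—more delicately—that the accumulated errors from applying \eqref{S_beta} across the entire range $0\le j\le n-\beta-2$ (where the individual error terms grow like $q^{n/2+j}$, reaching size about $q^{3n/2-\beta}$ at the top of the range) collapse to $O(q^n)$ after multiplication by $(q-1)q^{\beta-n/2}$. Finally, checking the range hypotheses of Lemma \ref{S_n}—$\beta$ even and positive forces $2\le\beta\le n-2$ when $n$ is even and $\beta\le n-1$ when $n$ is odd, so the reflection formulas genuinely apply (this is where the assumption $\beta\neq 0$ enters)—is routine but should be recorded.
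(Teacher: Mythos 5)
Your proposal is correct and follows essentially the same route as the paper's own proof: apply the reflection formulas \eqref{S_n_odd} and \eqref{S_n_even} to trade $\beta$ for $n-1-\beta$, feed in the baseline estimate \eqref{S_beta}, and for even $n$ sum the geometric progression of even-$j$ main terms to get $\pi_q(n)(q^{\beta/2}-q^{\beta-n/2})$ while the accumulated errors collapse to $O(q^n)$ after the prefactor. Your explicit check that $\beta\neq 0$ is what places $\beta$ in the admissible range $1\le\beta\le n-2$ of Lemma \ref{S_n}(ii) is a point the paper leaves implicit, but otherwise the two arguments coincide.
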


\begin{remarks} (i) The result above is essentially the same as Proposition 9 in \cite{Rud10} with one additonal term; namely, $\pi_q(n)q^\frac{\beta}{2}$.

(ii) As Rudnick points out in \cite{Rud10}, the main tool in proving Theorem \ref{thm_beta_even} is duality; it allows us to improve the error term in estimates of $S(\beta;n)$ and to get results holding for $n<4g$ and not only for $n<2g$. We would like to mention that the duality present in our character sums $S(\beta;n)$ is based on the functional equation (\ref{FunctionalEquation_1})
\begin{align*}
L^*(u,\chi_P)=(uq^2)^{\lfloor{\frac{\deg(P)-1}{2}}\rfloor} L^*(\frac{1}{qu},\chi_P),
\end{align*}
for prime characters $\chi_P$ (see the proof of Proposition 7 in \cite{Rud10}).
\end{remarks}

\begin{proof}
(i) If $n$ is odd, we apply (\ref{S_n_odd}) to $S(\beta;n)$ and then apply (\ref{S_beta}) to $S(n-1-\beta;n)$:
\begin{align*}
S(\beta;n)&=q^{\beta-\frac{n-1}{2}}S(n-1-\beta;n)\\
&=q^{\beta-\frac{n-1}{2}}\Biggr(\pi_q(n)q^\frac{n-1-\beta}{2}+O\Big(\frac{n-1-\beta}{n}q^{\frac{n}{2}+n-1-\beta}\Big)\Biggr)\\
&=\pi_q(n)q^{\frac{\beta}{2}}+O(q^n).
\end{align*}
(ii) If $n$ is even, we apply (\ref{S_n_even}) to $S(\beta;n)$ and then apply (\ref{S_beta}) to $S(n-1-\beta;n)$:
\begin{align*}
S(\beta;n)&=q^{\beta-\frac{n}{2}}\Biggr(-S(n-1-\beta;n)+(q-1)\sum_{j=0}^{n-\beta-2}S(j;n)\Biggr)\\
&=q^{\beta-\frac{n}{2}}\Biggr(O\Big(\frac{n-1-\beta}{n}q^{\frac{n}{2}+n-1-\beta}\Big)+(q-1)\sum_{j=0}^{n-\beta-2}\Big(\eta_j\pi_q(n)q^{\frac{j}{2}}+O(\frac{j}{n}q^{\frac{n}{2}+j})\Big)\Biggr).
\end{align*}
The two error terms are $O(q^n)$. Since both $n$ and $\beta$ are even, $n-\beta-2$ is even and we may rewrite the main term as 
\begin{align*}
\pi_q(n)q^{\beta-\frac{n}{2}}(q-1)\sum_{j=0}^{\frac{n-\beta-2}{2}}q^j.
\end{align*}
Hence,
\begin{align*}
S(\beta;n)&=\pi_q(n)q^{\beta-\frac{n}{2}}(q-1)\sum_{j=0}^{\frac{n-\beta-2}{2}}q^j+O(q^n)\\
&=\pi_q(n)q^{\beta-\frac{n}{2}}(q^{\frac{n-\beta}{2}}-1)+O(q^n)\\
&=\pi_q(n)(q^{\frac{\beta}{2}}-q^{\beta-\frac{n}{2}})+O(q^n).
\end{align*}
\end{proof}

\section{Computing $\tr(\Theta_{C_Q}^n)$ for $Q\in \f_{2g+2}$}
For the time being, we restrict ourselves to $\f_{2g+2}$. Let $Q\in \f_{2g+2}$ and consider the curve $C_Q:y^2=Q(x)$. The trace of the powers of $\Theta_{C_Q}$ is given by equation (\ref{trace}):
\begin{align}
\label{trace_2g+2}
\tr(\Theta_{C_Q}^n)&=-\frac{1}{q^{\frac{n}{2}}}-\frac{1}{q^{\frac{n}{2}}}\sum_{\deg(f)=n}\Lambda(f)\chi_Q(f)\\
&=-\frac{1}{q^{\frac{n}{2}}}-\frac{1}{q^{\frac{n}{2}}}\sum_{\substack{{P,k}\\{\deg(P^k)}=n}}\deg(P)\chi_Q(P^k)\\
&=-\frac{1}{q^{\frac{n}{2}}}+\p_n+\square_n + \g_n,
\end{align}
where $\p_n$ corresponds to $k=1$, $\square_n$ corresponds to the sum over all $k$ even, and $\g_n$ corresponds to the sum over all odd $k\geq 3$.

In the next three sections, we continue to use Rudnick's methods in order to compute $\p_n$, $\square_n$, and $\g_n$. Not surprinsingly, our results will only slightly differ from Rudnick's. The addition of $-1/q^\frac{n}{2}$ from (1.1) will be the main difference. We will also have different cut-off points for $n$ when estimating $\p_n$ (see section 5.3 of \cite{Rud10}).

\section{Contribution of the Primes: $\p_n$}
\label{section5}

The contribution of the primes in (\ref{trace_2g+2}) is given by:

\begin{align*}
\p_n=-\frac{1}{q^{\frac{n}{2}}}\sum_{\deg(P)=n}n\chi_Q(P).
\end{align*}
So,
\begin{align*}
\langle    \p_n \rangle_{\f_{2g+2}}&=\frac{-n}{(q-1)q^{2g+1+\frac{n}{2}}}\sum_{\deg(P)=n}\sum_{2\alpha+\beta=2g+2}\sigma_n(\alpha)\sum_{\deg(B)=\beta}\Biggr(\frac{B}{P}\Biggr)\\
&=\frac{-n}{(q-1)q^{2g+1+\frac{n}{2}}}\sum_{2\alpha+\beta=2g+2}\sigma_n(\alpha)S(\beta;n).
\end{align*}\\

From Lemma \ref{lem_alpha}, if $n>g+1$, then
\begin{align*}
\sigma_n(\alpha)\neq 0 &\Rightarrow \mbox{($\alpha \equiv 0$ $(n)$ or $\alpha \equiv 1$ $(n)$)}\\
& \Rightarrow \mbox{($\alpha=0$ or $\alpha=1$)},
\end{align*} 
which follows from the fact that $0\leq \alpha\leq g+1$.\\
\noindent Since $\sigma_n(0)=1$ and $\sigma_n(1)=-q$, when $n>g+1$, we have
\begin{align*}
\langle \p_n \rangle_{\f_{2g+2}}=\frac{-n}{(q-1)q^{2g+1 +\frac{n}{2}}} (S(2g+2;n)-qS(2g;n)).
\end{align*}

We now compute $\langle \p_n \rangle_{\f_{2g+2}}$ by considering the case $n\leq g+1$ and the case $n>g+1$, which we break into four (non-distinct) ranges:\\

(i) $n\leq g+1:$ If $S(\beta;n)\neq 0$, then $\beta<n$; since $\beta$ is even,
\begin{align*}
S(\beta;n)&=\pi_q(n)q^{\frac{\beta}{2}}+O(\frac{\beta}{n}q^{\beta+\frac{n}{2}})\\
&=\frac{q^{n+\frac{\beta}{2}}}{n}+O(\frac {q^{\frac{n}{2}+\frac{\beta}{2}}}{n})+O(\frac{\beta}{n}q^{\beta+\frac{n}{2}}).
\end{align*}
Then
\begin{align*}S(\beta;n)\ll \frac{\beta}{n}q^{n+\beta},\end{align*}
which implies that
\begin{align*}
\langle   \p_n\rangle_{\f_{2g+2}} &\ll\frac{n}{q^{2g+\frac{n}{2}}}\sum_{\beta<n}\frac{\beta}{n}q^{\beta+n}\\
&\ll\frac{n}{q^{2g+\frac{n}{2}}}q^{2n}=nq^{\frac{3n}{2}-2g}\ll gq^{\frac{-g}{2}}.
\end{align*}

(ii) $g+1<n<2g+1:$ Since $2g+2,2g\geq n$, $S(2g+2;n)=S(2g;n)=0.$ Hence,
\begin{align*}\langle    \p_n \rangle_{\f_{2g+2}}= \frac{-n}{(q-1)q^{2g+1+\frac{n}{2}}}(S(2g+2;n)-qS(2g;n))=0.\end{align*}

(iii) $n=2g+1:$ Since $2g+2\geq n$, $S(2g+2;n)=0$ and we get that
\begin{align*}
\langle    \p_n \rangle_{\f_{2g+2}}&= \frac{n}{(q-1)q^{2g+1+\frac{n}{2}}}\cdot q \cdot S(2g;n)\\
&=\frac{2g+1}{(q-1)q^{3g+\frac{1}{2}}}\cdot S(2g;2g+1).
\end{align*}
Using (\ref{S(n-1;n)_odd}),
\begin{align*}
\langle    \p_n \rangle_{\f_{2g+2}}
&=\frac{2g+1}{(q-1)q^{3g+\frac{1}{2}}}\cdot (\pi_q(2g+1)q^{\frac{(2g+1)-1}{2}}).
\end{align*}
By replacing $\pi_q(2g+1)$ and simpligying, we obtain
\begin{align*}
\langle    \p_n \rangle_{\f_{2g+2}}&= \frac{2g+1}{(q-1)q^{3g+\frac{1}{2}}}\cdot \Biggr(\Biggr(\frac{q^{2g+1}}{2g+1}+O\Big(\frac{q^g}{2g+1}\Big)\Biggr)q^g\Biggr)\\
&= \frac{q^{\frac{1}{2}}}{q-1}+O(q^{-g}).
\end{align*}

(iv) $n=2g+2$: Similarly,
\begin{align*}
\langle    \p_n \rangle_{\f_{2g+2}}&= \frac{n}{(q-1)q^{2g+1+\frac{n}{2}}}\cdot q \cdot S(2g;n)\\
&=\frac{2g+2}{(q-1)q^{3g+1}}\cdot S(2g;2g+2).
\end{align*}
From Theorem \ref{beta_even},
\begin{align*}
\langle    \p_n \rangle_{\f_{2g+2}}&=\frac{2g+2}{(q-1)q^{3g+1}}\cdot \Biggr(\Big(\frac{q^{2g+2}}{2g+2}+O(\frac{q^g}{2g+2})\Big)(q^g-q^{g-1})+O(q^{2g})\Biggr)\\
&=1+O(q^{-g})+O(gq^{-g}).  
\end{align*}

(v) $n>2g+2:$ We apply Theorem \ref{beta_even} to get 
\begin{align*}
\langle    \p_n \rangle_{\f_{2g+2}} &= \frac{-n}{(q-1)q^{2g+1+\frac{n}{2}}}\big(S(2g+2;n)-qS(2g;n)\big)\\
&=\frac{-n}{(q-1)q^{2g+1+\frac{n}{2}}}\Biggr(\pi_q(n)(q^{\frac{2g+2}{2}}-\eta_nq^{2g+2-\frac{n}{2}})-q\cdot\pi_q(n)(q^{\frac{2g}{2}}-\eta_nq^{2g-\frac{n}{2}})+O(q^n)\Biggr).
\end{align*}
Upon further simplification,
\begin{align*}
\langle    \p_n \rangle_{\f_{2g+2}}&=\frac{n}{(q-1)q^{2g+1+\frac{n}{2}}}\Biggr(\eta_n \pi_q(n) (q^{2g+2-\frac{n}{2}}-q^{2g+1-\frac{n}{2}})+O(q^n)\Biggr)\\
&=\frac{n\eta_n \pi_q(n)}{q^n}+O(nq^{\frac{n}{2}-2g})\\
&=\eta_n (1+O(q^{\frac{-n}{2}}))+O(nq^{\frac{n}{2}-2g}).
\end{align*}

\begin{note}
When $n=2g+2$, (v) yields (iv). 
\end{note}

\section{Contribution of the Squares: $\square_n$}

For $n$ even, we have the following contribution from the squares of prime powers:
\begin{align*}
\square_n &= -\frac{1}{q^{\frac{n}{2}}}\sum_{\deg(P^{2k})=n}\Lambda(P^{2k})\chi_Q(P^{2k})\\
&=-\frac{1}{q^{\frac{n}{2}}}\sum_{\deg(P^{k})=\frac{n}{2}}\Lambda(P^{k})\chi_Q((P^{k})^2)\\
&=-\frac{1}{q^{\frac{n}{2}}}\sum_{\deg(h)=\frac{n}{2}}\Lambda(h)\chi_Q(h^2).
\end{align*}

Therefore,
\begin{align*}
\langle    \square_n \rangle_{\f_{2g+2}}&=-\frac{1}{q^{\frac{n}{2}}}\sum_{\deg(P^{k})=\frac{n}{2}}\Lambda(P^{k})\langle   \chi_Q(P^{2k})\rangle_{\f_{2g+2}}\\
&=-\frac{1}{q^{\frac{n}{2}}}\sum_{\deg(P^{k})=\frac{n}{2}}\deg(P)\frac{1}{(q-1)q^{2g+1}}\sum_{0\leq \alpha \leq g+1}\sum_{\substack{{\deg(A)=\alpha}\\{P\nmid A}}}\mu(A)\sum_{\substack{{\deg(B)=2g+2-2\alpha}\\{P\nmid B}}}1.
\end{align*}

Although section 5 shows a slight deviation in $\langle\p_n \rangle_{\f_{2g+2}}$ from $ \langle \p_n \rangle_{\f_{2g+1}}$, we will see that $\langle\square_n \rangle_{\f_{2g+2}}=\langle    \square_n \rangle_{\f_{2g+1}}.$\\

Let $m=\deg(P)$. Since
\begin{align*}
\#\{\mbox{$B:\deg(B)=\beta$, $P\nmid B$}\}= q^{\beta}\cdot\left\{\def\arraystretch{1.2}%
\begin{array}{@{}c@{\quad}l@{}}
1 & \mbox{if $m>\beta$} \\
1-\frac{1}{|P|} & \mbox{if $m\leq \beta$,}\\
\end{array}\right.
\end{align*}

\begin{align*}
\langle \p_n \rangle_{\f_{2g+1}}
&=-\frac{1}{q^{\frac{n}{2}}}\sum_{\deg(P^{k})=\frac{n}{2}}\deg(P)\frac{1}{(q-1)q^{2g+1}}\sum_{0\leq \alpha \leq g+1}\sum_{\substack{{\deg(A)=\alpha}\\{P\nmid A}}}\mu(A)\sum_{\substack{{\deg(B)=2g+2-2\alpha}\\{P\nmid B}}}1\\
&=-\frac{q}{(q-1)q^{\frac{n}{2}}}\sum_{\deg(P^{k})=\frac{n}{2}}\deg(P)\cdot \Biggr(
\sum_{g+1-\frac{m}{2}<\alpha \leq g+1}\frac{\sigma_m(\alpha)}{q^{2\alpha}}\\
&\qquad\qquad\qquad\qquad\qquad\qquad+\Big(1-\frac{1}{|P|}\Big)\cdot \sum_{0\leq \alpha \leq g+1-\frac{m}{2}}\frac{\sigma_m(\alpha)}{q^{2\alpha}}\Biggr)\\
&=-\frac{q}{(q-1)q^{\frac{n}{2}}}\sum_{\deg(P^{k})=\frac{n}{2}}\deg(P)\cdot \Biggr(
\Big(1-\frac{1}{|P|}\Big)\cdot \sum_{\alpha \geq 0}\frac{\sigma_m(\alpha)}{q^{2\alpha}}\\
&\qquad\qquad\qquad\qquad\qquad\qquad+\sum_{g+1-\frac{m}{2}<\alpha \leq g+1}\frac{\sigma_m(\alpha)}{q^{2\alpha}}
-\Big(1-\frac{1}{|P|}\Big)\cdot \sum_{\alpha >g+1-\frac{m}{2} }\frac{\sigma_m(\alpha)}{q^{2\alpha}}
\Biggr)\\
&=-\frac{q}{(q-1)q^{\frac{n}{2}}}\sum_{\deg(P^{k})=\frac{n}{2}}\deg(P)\cdot \Biggr(
\Big(1-\frac{1}{|P|}\Big)\cdot \sum_{\alpha \geq 0}\frac{\sigma_m(\alpha)}{q^{2\alpha}}\\
&\qquad\qquad\qquad\qquad\qquad\qquad-\sum_{\alpha > g+1}\frac{\sigma_m(\alpha)}{q^{2\alpha}}
+\frac{1}{|P|}\cdot \sum_{\alpha >g+1-\frac{m}{2} }\frac{\sigma_m(\alpha)}{q^{2\alpha}}
\Biggr)\\
&=-\frac{q}{(q-1)q^{\frac{n}{2}}}\sum_{\deg(P^{k})=\frac{n}{2}}\deg(P)\cdot \Biggr(
\Big(1-\frac{1}{|P|}\Big)\cdot \sum_{\alpha \geq 0}\frac{\sigma_m(\alpha)}{q^{2\alpha}}+O(q^{-2g})\Biggr).
\end{align*}

Solving the recurrence relation in Lemma \ref{lem_alpha},
\begin{align*}
\langle    \square_n \rangle_{\f_{2g+2}}
&=-\frac{q}{(q-1)q^{\frac{n}{2}}}\sum_{\deg(P^{k})=\frac{n}{2}}\deg(P)\cdot \Biggr(\Big(1-\frac{1}{|P|}\Big)\cdot \frac{1-\frac{1}{q}}{1-\frac{1}{|P|^2}}+O(q^{-2g})\Biggr)\\
&=-\frac{1}{q^{\frac{n}{2}}}\sum_{\deg(P)|\frac{n}{2}} \deg(P)\cdot \Big(\frac{|P|}{|P|+1}+O(q^{-2g})\Big)\\
&=-\frac{1}{q^{\frac{n}{2}}}\sum_{\deg(P)|\frac{n}{2}} \deg(P)\cdot \Big(1-\frac{1}{|P|+1}+O(q^{-2g})\Big)\\
&=-\frac{1}{q^{\frac{n}{2}}}\sum_{\deg(P)|\frac{n}{2}} \deg(P)
+\frac{1}{q^{\frac{n}{2}}}\sum_{\deg(P)|\frac{n}{2}} \deg(P)\cdot \Big(\frac{1}{|P|+1}+O(q^{-2g})\Big).
\end{align*}

Since
\begin{align*}
q^l=\sum_{\deg(h)=l} \Lambda(h)=\sum_{\deg(P^k)=l}\deg(P)=\sum_{\deg(P)|l}\deg(P),
\end{align*}

\begin{align*}
\langle    \square_n \rangle_{\f_{2g+2}} &=
-1+\frac{1}{q^{\frac{n}{2}}}\sum_{\deg(P)|\frac{n}{2}} \frac{\deg(P)}{|P|+1}+O(q^{-2g}).
\end{align*}

\section{Contribution of the Higher Prime Powers: $\g_n$}
\label{section7}

Using trivial bounds for $\g_n$, we obtain slightly different results from Rudnick in the case where $n>6g$. This is due to the fact that our estimates of $\g_n$ involve $S(\beta;n)$ for $\beta$ even, as opposed to $\beta$ odd, and, from Lemma \ref{lem_S_beta}, 
\begin{align*}
S(\beta;n)\ll\left\{\def\arraystretch{1.2}%
\begin{array}{@{}l@{\quad}l@{}}
q^{n+\beta} & \text{if $\beta$ is even} \\
q^{\frac{n}{2}+\beta} & \text{if $\beta$ is odd.}
\end{array}\right.
\end{align*}
We shall see that our bounds for $n>6g$ are absorbed in the error term from $\langle \p_n \rangle_{\f_{2g+2}}$ when $n>2g+1$.\\

The contribution to $\tr(\Theta_{C_Q}^n)$ from the higher odd prime powers in (\ref{trace_2g+2}) is:  

\begin{align*}
\g_n &= -\frac{1}{q^{\frac{n}{2}}}\sum_{\substack{{d|n}\\{3 \text{$\leq d:$ odd}}}}\sum_{\deg(P)=\frac{n}{d}}\frac{n}{d}\chi_Q(P^d)\\
&= -\frac{1}{q^{\frac{n}{2}}}\sum_{\substack{{d|n}\\{3 \text{$\leq d:$ odd}}}}\sum_{\deg(P)=\frac{n}{d}}\frac{n}{d}\chi_Q(P),
\end{align*}
where the last equality follows from the fact that $\chi_Q(P^d)=\chi_Q(P)$ for odd $d$.

This implies that
\begin{align*}
\langle \g_n \rangle_{\f_{2g+2}} &= 
-\frac{1}{(q-1)q^{2g+1+\frac{n}{2}}}\sum_{\substack{{d|n}\\{3 \text{$\leq d:$ odd}}}}\frac{n}{d}
\sum_{\deg(P)=\frac{n}{d}}
\sum_{2\alpha + \beta = 2g+2} \sigma_{\frac{n}{d}}(\alpha)
\sum_{\deg(B)=\beta}\Big(\frac{B}{P}\Big)\\
&=-\frac{1}{(q-1)q^{2g+1+\frac{n}{2}}}\sum_{\substack{{d|n}\\{3 \text{$\leq d:$ odd}}}}\frac{n}{d}\sum_{2\alpha + \beta = 2g+2} \sigma_{\frac{n}{d}}(\alpha)S(\beta;\frac{n}{d}).
\end{align*}

If $S(\beta;\frac{n}{d})\neq 0$, then $\beta < \frac{n}{d}$; also, $S(\beta; \frac{n}{d})\ll q^{\frac{n}{d}+\beta}$. Hence,

\begin{align*}
\langle \g_n \rangle_{\f_{2g+2}} &\ll \frac{1}{q^{2g+\frac{n}{2}}}
\sum_{\substack{{d|n}\\{3 \text{$\leq d:$ odd}}}}\frac{n}{d}
\sum_{\beta \leq \min(\frac{n}{d},2g+2)}q^{\frac{n}{d}+\beta}\\
&\ll \frac{n}{q^{2g+\frac{n}{2}}}\sum_{\substack{{d|n}\\{3 \text{$\leq d:$ odd}}}}q^{\frac{n}{d}+\min(2g,\frac{n}{d})}.
\end{align*}

If $\frac{n}{3}\leq 2g$, then $\min(\frac{n}{d},2g)=\frac{n}{d}$ for all $d\geq 3$; and so,
\begin{align*}
\langle \g_n \rangle_{\f_{2g+2}} &\ll \frac{n}{q^{2g+\frac{n}{2}}}\sum_{\substack{{d|n}\\{3 \text{$\leq d:$ odd}}}}q^{\frac{2n}{d}}\ll  \frac{n}{q^{2g+\frac{n}{2}}}\cdot q^\frac{2n}{3} = \frac{n}{q^{2g}}\cdot q^{\frac{n}{6}}\\
&\ll \frac{g}{q^{2g}}\cdot q^g=gq^{-g}.
\end{align*}

If $\frac{n}{3}>2g$, then $\min(\frac{n}{d},2g)\leq \frac{n}{3}$ for all $d\geq 3$; therefore,
\begin{align*}
\langle \g_n \rangle_{\f_{2g+2}} &\ll \frac{n}{q^{2g+\frac{n}{2}}}\cdot q^\frac{2n}{3} \ll nq^{\frac{n}{6}-2g}.
\end{align*}

\section{Computing $\langle \tr(\Theta_{C_Q}^n)\rangle_{\f_{2g+2}}$}
\label{section8}

Since $\langle \tr(\Theta_{C_Q}^n)\rangle_{\f_{2g+2}} = -\frac{1}{q^{\frac{n}{2}}}+\langle \p_n \rangle_{\f_{2g+2}} + \langle \square_n \rangle_{\f_{2g+2}} + \langle \g_n \rangle_{\f_{2g+2}}$, we obtain the following:

\begin{align*}
\langle \tr\Theta_{C_Q}^n\rangle_{\f_{2g+2}} = -\frac{1}{q^{\frac{n}{2}}}&+
\left\{\def\arraystretch{1.2}%
\begin{array}{@{}l@{\quad}l@{}}
O(gq^{\frac{-g}{2}}) & 0 <n\leq g+1 \\
0 & g+1<n<2g+1\\
\frac{q^{\frac{1}{2}}}{q-1}+O(q^{-g}) & n=2g+1\\
\eta_n(1+O(q^{\frac{-n}{2}}))+O(nq^{\frac{n}{2}-2g}) & 2g+1<n\\
\end{array}\right.\\
&+\eta_n(-1+\frac{1}{q^{\frac{n}{2}}}\sum_{\deg(P)|\frac{n}{2}} \frac{\deg(P)}{|P|+1}+O(q^{-2g}))\\
&+
\left\{\def\arraystretch{1.2}%
\begin{array}{@{}c@{\quad}l@{}}
O(gq^{-g}) & n\leq 6g\\
O(nq^{\frac{n}{6}-2g}) & 6g<n.\\
\end{array}\right.
\end{align*}

\noindent In particular,
\begin{theorem}
\label{pre_thm}
\begin{align*}
\langle \tr\Theta_{C_Q}^n\rangle_{\f_{2g+2}} = -\frac{1}{q^{\frac{n}{2}}}&+\eta_n \frac{1}{q^{\frac{n}{2}}}\cdot\sum_{\deg(P)|\frac{n}{2}} \frac{\deg(P)}{|P|+1} +O(gq^{\frac{-g}{2}})\\
&+ 
\left\{\def\arraystretch{1.2}%
\begin{array}{@{}l@{\quad}l@{}}
-\eta_n & 0<n< 2g+1\\
\frac{q^{\frac{1}{2}}}{q-1} & n=2g+1\\
O(nq^{\frac{n}{2}-2g}) & 2g+1<n.\\
\end{array}\right.
\end{align*}
\end{theorem}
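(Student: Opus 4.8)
The plan is to assemble Theorem \ref{pre_thm} directly from the decomposition
\begin{align*}
\langle \tr\Theta_{C_Q}^n\rangle_{\f_{2g+2}} = -\frac{1}{q^{\frac{n}{2}}}+\langle \p_n \rangle_{\f_{2g+2}} + \langle \square_n \rangle_{\f_{2g+2}} + \langle \g_n \rangle_{\f_{2g+2}},
\end{align*}
using the piecewise estimates for $\langle \p_n \rangle_{\f_{2g+2}}$ established in section \ref{section5}, the closed form for $\langle \square_n \rangle_{\f_{2g+2}}$ from section \ref{section7}, and the trivial bounds for $\langle \g_n \rangle_{\f_{2g+2}}$. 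The task is essentially bookkeeping: to verify that, after adding the three contributions across each range of $n$, the error terms collapse into the single uniform shape $O(gq^{-g/2})$ plus the claimed $n$-dependent main term, and that the cases where the contributions vanish (for instance $\square_n$ and $\g_n$ when $n$ is odd, since $\Lambda$ is supported on prime powers and $\square_n$ requires $n$ even) are correctly absorbed by the factor $\eta_n$.

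First I would treat the parity split explicitly. For $n$ odd, both $\square_n$ and $\g_n$ contribute nothing of the main-term type ($\square_n$ is identically zero since $n/2\notin\Z$, and $\g_n$ is only an error), so every appearance of $\eta_n$ in the statement is zero and only $\langle \p_n \rangle_{\f_{2g+2}}$ and the stray $-q^{-n/2}$ survive; I would check that the piecewise data of section \ref{section5} then reproduces the odd-$n$ reading of the theorem. For $n$ even I would carry the full main term $\eta_n q^{-n/2}\sum_{\deg(P)\mid \frac{n}{2}} \frac{\deg(P)}{|P|+1}$ coming from $\langle \square_n \rangle_{\f_{2g+2}}$, together with the $-\eta_n$ coming from the $-1$ in $\langle \square_n \rangle_{\f_{2g+2}}$ in the range $0<n<2g+1$.

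Next I would merge the error terms range by range. In the range $0<n\leq g+1$ the prime contribution is $O(gq^{-g/2})$ and the square/higher-power errors are $O(q^{-2g})$ and $O(gq^{-g})$, all dominated by $O(gq^{-g/2})$. In the range $g+1<n<2g+1$ the prime term vanishes and the residual errors are again $O(gq^{-g/2})$; the main term from $\langle \square_n \rangle_{\f_{2g+2}}$ together with its $-\eta_n$ gives exactly the stated form. At $n=2g+1$ the prime contribution produces the isolated term $q^{1/2}/(q-1)$, which I would record separately as in the statement. For $2g+1<n$ I would combine the prime estimate $\eta_n(1+O(q^{-n/2}))+O(nq^{n/2-2g})$ with the square main term and observe that the $+\eta_n$ from the primes cancels against a $-\eta_n$ so that the surviving $n$-dependent error is $O(nq^{n/2-2g})$; crucially, the $\g_n$ bound $O(nq^{n/6-2g})$ for $6g<n$ is swamped by $O(nq^{n/2-2g})$, which is why the two $\g_n$ regimes need not appear in the final statement.

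The main obstacle I anticipate is not any single estimate but the uniformity of the error term: I must confirm that the various $O(q^{-2g})$, $O(gq^{-g})$, and $O(gq^{-g/2})$ terms, summed over the (at most $O(n)$ many) divisors and prime-power indices implicit in $\square_n$ and $\g_n$, genuinely collapse into a single $O(gq^{-g/2})$ valid simultaneously across all the listed ranges, and that the piecewise boundaries (especially the transition at $n=2g+1$ and the note that case (v) recovers case (iv) at $n=2g+2$) are consistent. Once these bookkeeping checks are done, the theorem follows by direct substitution, and I would present it as a short verification rather than a computation.
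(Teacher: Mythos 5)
Your proposal is correct and matches the paper's own proof, which in section \ref{section8} simply sums $-q^{-n/2}$, the piecewise estimates for $\langle \p_n \rangle_{\f_{2g+2}}$, the closed form for $\langle \square_n \rangle_{\f_{2g+2}}$, and the bounds for $\langle \g_n \rangle_{\f_{2g+2}}$, then absorbs all residual errors into $O(gq^{-g/2})$. Your range-by-range bookkeeping, including the cancellation of $+\eta_n$ (primes) against $-\eta_n$ (squares) for $n>2g+1$ and the domination of the $O(nq^{n/6-2g})$ bound by $O(nq^{n/2-2g})$, is exactly the verification the paper carries out.
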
 

\section{Computing $\langle \tr(\Theta_{C_Q}^n)\rangle_{\f_{2g+1}\bigcup \f_{2g+2}}$}

From \cite{Rud10}, we have the following:

\begin{theorem}
\label{Thm_Rud}
\begin{align*}
\langle \tr\Theta_{C_Q}^n\rangle_{\f_{2g+1}} = \eta_n \frac{1}{q^{\frac{n}{2}}}\cdot\sum_{\deg(P)|\frac{n}{2}} \frac{\deg(P)}{|P|+1} +O(gq^{-g})
+ 
\left\{\def\arraystretch{1.2}%
\begin{array}{@{}l@{\quad}l@{}}
-\eta_n & 0<n<2g\\
-1-\frac{1}{q-1} & n=2g\\
O(nq^{\frac{n}{2}-2g}) & 2g<n.\\
\end{array}\right.
\end{align*}
\end{theorem}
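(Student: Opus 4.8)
The plan is to run the computation of Sections \ref{section5} through \ref{section8} almost verbatim, but over the odd-degree family $\f_{2g+1}$ in place of $\f_{2g+2}$; indeed Theorem \ref{Thm_Rud} is exactly the output of that machine for $Q$ of degree $2g+1$. Starting from the trace formula (\ref{trace}), the decisive simplification is that for $Q\in\f_{2g+1}$ the degree $2g+1$ is odd, so (\ref{lambda}) gives $\lambda_Q=0$ and the term $-\lambda_Q^n/q^{n/2}$ vanishes identically. This is precisely why Theorem \ref{Thm_Rud} carries no $-1/q^{n/2}$ summand, in contrast to Theorem \ref{pre_thm}. I then split $\tr(\Theta_{C_Q}^n)$ into the prime part $\p_n$ ($k=1$), the square part $\square_n$ ($k$ even), and the higher odd-power part $\g_n$ ($k\geq 3$ odd), and average each over $\f_{2g+1}$ through the sieve (\ref{ave}) with $d=2g+1$, using $\#\f_{2g+1}=(q-1)q^{2g}$.

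The single structural change that threads through every estimate is parity. The sieve now reads $2\alpha+\beta=2g+1$, so $\beta$ is always odd. Consequently, at each place where the $\f_{2g+2}$ argument invoked the even-$\beta$ bound of Theorem \ref{thm_beta_even}, I instead invoke the odd-$\beta$ estimate, namely Lemma \ref{lem_S_beta} together with its duality-based refinement (the odd-$\beta$ analogue of Theorem \ref{thm_beta_even}, which is Proposition 9 of \cite{Rud10}). Because $\eta_\beta=0$ for $\beta$ odd, Lemma \ref{lem_S_beta} supplies the sharper bound $S(\beta;n)=O(\tfrac{\beta}{n}q^{n/2+\beta})$; this is what lets $\p_n$ be controlled across the full range (producing the cleaner $O(gq^{-g})$ error), collapses it to $0$ in the middle range $g+1<n<2g$, and pushes the usable window for $n$ out toward $4g$.

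For the three pieces themselves: the square part $\square_n$ is insensitive to the degree parity, since it only counts polynomials $B$ coprime to $P$ paired against $\chi_Q(h^2)$, so the computation of the squares section carries over unchanged and yields $\eta_n\big(-1+q^{-n/2}\sum_{\deg(P)\mid n/2}\tfrac{\deg(P)}{|P|+1}+O(q^{-2g})\big)$. The higher-power part $\g_n$ is handled by the same trivial bounds as in Section \ref{section7}, where the odd-$\beta$ estimate $S(\beta;n/d)\ll q^{n/(2d)+\beta}$ makes the contribution smaller than in the even case, so it is absorbed into the error. The real work is in $\p_n$ near the transition $n=2g$: there $\sigma_n(\alpha)$ is supported on $\alpha\in\{0,1\}$, the $\alpha=0$ term gives $S(2g+1;2g)=0$ by (\ref{S(beta;n)=0}) since $\beta=2g+1\geq n$, and the $\alpha=1$ term forces the exact evaluation $S(2g-1;2g)=-\pi_q(2g)q^{g-1}$ from (\ref{S(n-1;n)_even}). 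After the prefactor $-n/((q-1)q^{2g+n/2})$ and the Prime Polynomial Theorem, this produces precisely $\langle\p_{2g}\rangle_{\f_{2g+1}}=-\tfrac{1}{q-1}+O(q^{-g})$, which combines with the $-1$ from $\langle\square_{2g}\rangle$ to give the deviation $-1-\tfrac{1}{q-1}$ recorded at $n=2g$.

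I expect the main obstacle to be exactly this transition analysis at and just below $n=2g$: controlling $\p_n$ there requires the duality/functional-equation input of Lemma \ref{S_n} (the odd-$\beta$ reflection together with the exact boundary values (\ref{S(n-1;n)_odd}) and (\ref{S(n-1;n)_even})), and it is this input, rather than the routine bounds governing the interior and high ranges, that both pins down the constant $-1-\tfrac{1}{q-1}$ and extends the asymptotic into the range $n<4g$. Assembling the three pieces and collecting error terms then yields the stated formula.
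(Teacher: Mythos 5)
Your proposal is correct and takes essentially the same route as the source: the paper itself only quotes this theorem from Rudnick \cite{Rud10}, and your argument is a faithful reconstruction of Rudnick's proof, i.e.\ the same $\p_n$/$\square_n$/$\g_n$ decomposition and sieve that the paper runs in Sections \ref{section5}--\ref{section8} for $\f_{2g+2}$, with $\lambda_Q=0$ and the parity of $\beta$ flipped to odd so that the odd-$\beta$ estimates (Lemma \ref{lem_S_beta} and Proposition 9 of \cite{Rud10}) replace Theorem \ref{thm_beta_even}. In particular, your transition analysis at $n=2g$, using $S(2g+1;2g)=0$ and $S(2g-1;2g)=-\pi_q(2g)q^{g-1}$ from (\ref{S(n-1;n)_even}), correctly produces $\langle\p_{2g}\rangle_{\f_{2g+1}}=-\frac{1}{q-1}+O(q^{-g})$ and hence the constant $-1-\frac{1}{q-1}$.
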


We would like to find the expected value of $\tr(\Theta_{C_Q}^n)$ over all curves $C_Q:y^2=Q(x)$ of genus $g$ with $Q$ monic. To do this, we use Theorems \ref{pre_thm} and \ref{Thm_Rud}. By identifying the family of curves described above with $\f_{2g+1}\bigcup \f_{2g+2}$, we see that
\begin{align*}
\langle \tr(&\Theta_{C_Q}^n) \rangle _{\f_{2g+1}\bigcup \f_{2g+2}}\\
&=\frac{\#\f_{2g+1}}{\#(\f_{2g+1}\bigcup \f_{2g+2})}\langle \tr(\Theta_{C_Q}^n) \rangle _{\f_{2g+1}}+\frac{\#\f_{2g+2}}{\#(\f_{2g+1}\bigcup \f_{2g+2})}\langle \tr(\Theta_{C_Q}^n) \rangle _{\f_{2g+2}},
\end{align*}
where 
\begin{align*}
\#(\f_{2g+1}\bigcup \f_{2g+2})&=\#\f_{2g+1}+\#\f_{2g+2}\\
&=(q-1)q^{2g}+(q-1)q^{2g+1}\\
&=q^{2g}(q-1)(q+1).
\end{align*}

We obtain the following:
\begin{corollary}
\label{pre_cor}
\begin{align*}
\langle \tr(\Theta_{C_Q}^n) \rangle _{\f_{2g+1}\bigcup \f_{2g+2}}=-\frac{1}{q^{\frac{n}{2}}}\frac{q}{q+1}
&+\eta_n \frac{1}{q^{\frac{n}{2}}}\cdot\sum_{\deg(P)|\frac{n}{2}} \frac{\deg(P)}{|P|+1}
+O(gq^{\frac{-g}{2}})\\
&+ 
\left\{\def\arraystretch{1.2}%
\begin{array}{@{}l@{\quad}l@{}}
-\eta_n & 0<n< 2g\\
-1-\frac{1}{q^2-1} & n=2g\\
\frac{q^{\frac{3}{2}}}{q^2-1} & n=2g+1\\
O(nq^{\frac{n}{2}-2g}) & 2g+1<n.\\
\end{array}\right.
\end{align*}
\end{corollary}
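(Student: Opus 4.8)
The plan is to realize $\langle \tr(\Theta_{C_Q}^n) \rangle_{\f_{2g+1}\cup \f_{2g+2}}$ as the convex combination of the two averages already in hand, namely $\langle \tr(\Theta_{C_Q}^n) \rangle_{\f_{2g+1}}$ from Theorem \ref{Thm_Rud} and $\langle \tr(\Theta_{C_Q}^n) \rangle_{\f_{2g+2}}$ from Theorem \ref{pre_thm}. First I would record the weights: using $\#\f_{2g+1}=(q-1)q^{2g}$, $\#\f_{2g+2}=(q-1)q^{2g+1}$, and $\#(\f_{2g+1}\cup \f_{2g+2})=q^{2g}(q-1)(q+1)$ as displayed above, the two weights collapse to $\tfrac{1}{q+1}$ and $\tfrac{q}{q+1}$, so that
\[
\langle \tr(\Theta_{C_Q}^n)\rangle_{\f_{2g+1}\cup\f_{2g+2}} = \tfrac{1}{q+1}\langle \tr(\Theta_{C_Q}^n)\rangle_{\f_{2g+1}} + \tfrac{q}{q+1}\langle \tr(\Theta_{C_Q}^n)\rangle_{\f_{2g+2}}.
\]

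Next I would substitute the two formulas and collect terms by type. The term $-q^{-n/2}$ occurs only in the $\f_{2g+2}$ average, and weighting by $\tfrac{q}{q+1}$ produces the leading term $-\tfrac{1}{q^{n/2}}\tfrac{q}{q+1}$. The arithmetic main term $\eta_n q^{-n/2}\sum_{\deg(P)\mid \frac{n}{2}}\tfrac{\deg(P)}{|P|+1}$ appears identically in both families, so its total coefficient is $\tfrac{1}{q+1}+\tfrac{q}{q+1}=1$ and it survives unchanged. For the error terms, $\tfrac{1}{q+1}O(gq^{-g})+\tfrac{q}{q+1}O(gq^{-g/2})=O(gq^{-g/2})$ since $q^{-g/2}$ dominates $q^{-g}$.

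The one place requiring genuine care is the piecewise remainder, because the case boundaries differ between the two families ($2g$ for $\f_{2g+1}$ versus $2g+1$ for $\f_{2g+2}$); this mismatch is the main obstacle, modest as it is. I would treat four ranges. For $0<n<2g$ both families contribute $-\eta_n$, so the weighted total is again $-\eta_n$; for $2g+1<n$ both contribute $O(nq^{n/2-2g})$. The delicate nodes are $n=2g$ and $n=2g+1$. At $n=2g$ (even, $\eta_n=1$) the $\f_{2g+1}$ average supplies $-1-\tfrac{1}{q-1}=-\tfrac{q}{q-1}$ while the $\f_{2g+2}$ average is still in its first range and supplies $-\eta_n=-1$; weighting and clearing the common denominator $q^2-1$ gives
\[
\tfrac{1}{q+1}\!\left(-\tfrac{q}{q-1}\right)+\tfrac{q}{q+1}(-1)=\tfrac{-q-q^2+q}{q^2-1}=-\tfrac{q^2}{q^2-1}=-1-\tfrac{1}{q^2-1},
\]
exactly the stated value. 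At $n=2g+1$ (odd, $\eta_n=0$) the $\f_{2g+2}$ average sits at its $n=2g+1$ node contributing $\tfrac{q^{1/2}}{q-1}$, whereas $\f_{2g+1}$ has already passed into its $2g<n$ range and contributes only $O(nq^{n/2-2g})=O(gq^{-g+1/2})$, which is absorbed into $O(gq^{-g/2})$; weighting the surviving term yields $\tfrac{q}{q+1}\cdot\tfrac{q^{1/2}}{q-1}=\tfrac{q^{3/2}}{q^2-1}$. Assembling the four ranges gives precisely the claimed piecewise formula, and the entire argument is bookkeeping once the boundary mismatch is handled.
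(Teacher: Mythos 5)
Your proposal is correct and follows exactly the paper's route: the paper likewise writes $\langle \tr(\Theta_{C_Q}^n)\rangle_{\f_{2g+1}\bigcup\f_{2g+2}}$ as the weighted average of Theorems \ref{Thm_Rud} and \ref{pre_thm} with weights $\frac{1}{q+1}$ and $\frac{q}{q+1}$ coming from $\#\f_{2g+1}=(q-1)q^{2g}$, $\#\f_{2g+2}=(q-1)q^{2g+1}$, and $\#(\f_{2g+1}\bigcup\f_{2g+2})=q^{2g}(q-1)(q+1)$. Your case-by-case handling of the mismatched boundaries at $n=2g$ and $n=2g+1$ is exactly the bookkeeping the paper leaves implicit, and all of your arithmetic (in particular $-\tfrac{q^2}{q^2-1}=-1-\tfrac{1}{q^2-1}$ and $\tfrac{q}{q+1}\cdot\tfrac{q^{1/2}}{q-1}=\tfrac{q^{3/2}}{q^2-1}$) checks out.
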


\begin{note}
The first main term in Corollary \ref{pre_cor} does not appear in Theorem \ref{Thm_Rud}, neither does the term $\frac{q^{\frac{3}{2}}}{q^2-1}$ corresponding to $n=2g+1$. Similarly, for $n=2g$, the constant $\frac{1}{q-1}$ in Theorem \ref{Thm_Rud} is scaled down to $\frac{1}{q^2-1}$ in Corollary \ref{pre_cor}. In the next section, we shall see that these differences are diminished when we consider the average of $\tr(\Theta_{C_Q}^n)$ over $\h_g$.
\end{note}
     
\section{Computing $\langle \tr(\Theta_{C_Q}^n)\rangle_{\h_g}$}
\label{section10}

As we mentioned in the introduction, averaging over monic squarefree polynomials of a fixed degree is not the same as averaging over the moduli space of hyperelliptic curves of genus $g$: in the latter case, we consider polynomials of degree $2g+1$ and $2g+2$. Also, by restricting ourselves to monic polynomials, we introduce a bias in the average value of the trace: the contribution of the point at infinity is related to the leading coefficient of $Q$, as seen by equation (\ref{trace}).\\

We now turn our attention to finding the average of $\tr(\Theta_{C_Q}^n)$ over $\h_g$: from equations (\ref{first}) and (\ref{trace}),
\begin{align}
   \langle \tr(\Theta_{C_Q}^n) \rangle _{\h_g}&=\frac{1}{\#\h_g}\cdot \sum_{Q\in \h_g}\Big( -\frac{\lambda_Q^n}{q^\frac{n}{2}}-\frac{1}{q^\frac{n}{2}}\sum_{\deg(f)=n}\Lambda(f) \chi_Q(f)\Big),
   \end{align}
   where
\begin{align*}
\lambda_Q=\left\{\def\arraystretch{1.2}%
\begin{array}{@{}r@{\quad}l@{}}
-1 & \mbox{if $\deg(Q)$ is even and $\sgn(Q)\neq \square$}\\
0 & \mbox{if $\deg(Q)$ is odd}\\
1 & \mbox{if $\deg(Q)$ is even and $\sgn(Q)=\square$.}\\
\end{array}\right.
\end{align*}

   Since there are exactly $(q-1)/2$ squares and $(q-1)/2$ non-squares in $\F_q^*$, if $n$ is odd,
\begin{align*}\frac{1}{\#\h_g}\cdot\sum_{Q\in \h_g}\lambda_Q^n=\frac{1}{\#\h_g}\cdot\sum_{Q\in \h_g}\lambda_Q=0.\end{align*}
   On the other hand, if $n$ is even,
   \begin{align*}\frac{1}{\#\h_g}\cdot\sum_{Q\in \h_g}\lambda_Q^n=\frac{1}{\#\h_g}\cdot\sum_{Q\in \h_g}|\lambda_Q|=\frac{\#\widehat{\f}_{2g+2}}{\#\h_g}=\frac{q}{q+1}.\end{align*}
   
   Also, given $D\in \F_q[x]$ with $\deg(D)=d$, we may write $D=A^2 B$, where $A,B\in \F_q[x]$ with $A$ monic, $B$ not necessarily monic, and \text{$\deg(A)=\alpha$, $\deg(B)=\beta$}, so that $d=2\alpha + \beta$. From here, we can take the character sum above over all elements in $\F_q[x]$ of genus $g$ by sieving out the squarefree terms (as we did earlier):
   \begin{align*}
  \sum_{Q\in \h_g}\chi_Q(f)&=\sum_{\substack{{2\alpha+\beta=d}\\{d=2g+1,2g+2}}} \sum_{\substack{{\deg(B)=\beta}\\ \text{$B$ n.n.m.}}} \sum_{\deg(A)=\alpha} \mu(A)\Big(\frac{A}{f}\Big)^2\Big(\frac{B}{f}\Big)\\
  &=\sum_{\substack{{2\alpha+\beta=d}\\{d=2g+1,2g+2}}}\sigma(f;\alpha) \sum_{\substack{{\deg(B)=\beta}\\{\text{$B$ n.n.m.}}}}\Big(\frac{B}{f}\Big)\\
  &=\sum_{\substack{{2\alpha+\beta=d}\\{d=2g+1,2g+2}}} \sigma(f;\alpha)\sum_{a\in \F_q^*}\sum_{\deg(B)=\beta}\Big(\frac{aB}{f}\Big)\\
  &=\sum_{\substack{{2\alpha+\beta=d}\\{d=2g+1,2g+2}}} \sigma(f;\alpha)\sum_{a\in \F_q^*}\sum_{\deg(B)=\beta}\Big(\frac{a}{f}\Big)\cdot \Big(\frac{B}{f}\Big)\\
  &=\sum_{a\in \F_q^*}\Big(\frac{a}{f}\Big) \sum_{\substack{{2\alpha+\beta=d}\\{d=2g+1,2g+2}}} \sigma(f;\alpha)\sum_{\deg(B)=\beta}\Big(\frac{B}{f}\Big).
\end{align*}  
  
Hence,
\begin{align*}
&\langle \tr(\Theta_{C_Q}^n) \rangle _{\h_g}=\\
  &-\frac{1}{q^\frac{n}{2}\cdot \#\h_g}\Biggr(
  \sum_{Q\in \h_g}\lambda_Q^n+ \sum_{\deg(f)=n} \Lambda(f)\sum_{a\in \F_q^*}\Big(\frac{a}{f}\Big) \sum_{\substack{{2\alpha+\beta=d}\\{d=2g+1,2g+2}}} \sigma(f;\alpha)\sum_{\deg(B)=\beta}\Big(\frac{B}{f}\Big)\Biggr).
\end{align*}

If $f$ is a power of some prime in $\F_q[x]$, say $f=P^k$, then for all $a\in \F_q^*$ (see Proposition 3.2 of \cite{Ros02}),
\begin{align*}\Big(\frac{a}{f}\Big)=\Big(\frac{a}{P}\Big)^k=\Big(a^{\frac{q-1}{2}\deg(P)}\Big)^k=a^{\frac{q-1}{2}\deg(f)}.\end{align*}
If $\deg(f)=\deg(P^k)=n$ is even, then $\Big(\frac{a}{f}\Big)=1$ because $|\F_q^*|=q-1$. This tells us that $\sum_{a\in \F_q^*}\Big(\frac{a}{f}\Big)=q-1$. If $\deg(f)=\deg(P^k)=n$ is odd, then we have that $\sum_{a\in \F_q^*}\Big(\frac{a}{f}\Big)=0$ because there are exactly $(q-1)/2$ QR in $\F_q^*$ and exactly $(q-1)/2$ NQR in $\F_q^*$.

So, for $n$ odd,
\begin{align*}\langle \tr(\Theta_{C_Q}^n) \rangle _{\h_g}=0,\end{align*}
and for $n$ even,
\begin{align*}
\langle &\tr(\Theta_{C_Q}^n) \rangle _{\h_g}\\
&=-\frac{1}{q^\frac{n}{2}}\frac{q}{q+1}-\frac{1}{\#\h_g\cdot q^{\frac{n}{2}}}\sum_{\deg(f)=n} \Lambda(f)\sum_{a\in \F_q^*}\Big(\frac{a}{f}\Big) \sum_{\substack{{2\alpha+\beta=d}\\{d=2g+1,2g+2}}} \sigma(f;\alpha)\sum_{\deg(B)=\beta}\Big(\frac{B}{f}\Big)\\
&=-\frac{1}{q^\frac{n}{2}}\frac{q}{q+1}-\frac{1}{\#(\f_{2g+1}\bigcup \f_{2g+2})\cdot q^{\frac{n}{2}}}\sum_{\deg(f)=n} \Lambda(f) \sum_{\substack{{2\alpha+\beta=d}\\{d=2g+1,2g+2}}} \sigma(f;\alpha)\sum_{\deg(B)=\beta}\Big(\frac{B}{f}\Big)\\
&=-\frac{1}{q^\frac{n}{2}}\frac{q}{q+1}-\frac{1}{q^\frac{n}{2}}\sum_{\deg(f)=n} \Lambda(f) \langle \chi_Q(f) \rangle_{\f_{2g+1}\bigcup \f_{2g+2}}.
\end{align*}

In other words,
\begin{theorem}
\label{main_thm}
For $n$ odd,
\begin{align*}\langle \tr(\Theta_{C_Q}^n) \rangle _{\h_g}=0,\end{align*}
and for $n$ even,
\begin{align*}
\langle \tr(\Theta_{C_Q}^n) \rangle _{\h_g}
&= \langle \tr(\Theta_{C_Q}^n) \rangle _{\f_{2g+1}\bigcup \f_{2g+2}}\\
&=\frac{1}{q^{\frac{n}{2}}}\cdot\sum_{\substack{{\deg(P)|\frac{n}{2}}\\{\deg(P)\neq 1}}} \frac{\deg(P)}{|P|+1} +O(gq^{\frac{-g}{2}})
+
\left\{\def\arraystretch{1.2}%
\begin{array}{@{}l@{\quad}l@{}}
-1 & 0<n<2g\\
-1-\frac{1}{q^2-1} & n=2g\\
O(nq^{\frac{n}{2}-2g}) & 2g<n.\\
\end{array}\right.
     \end{align*}
\end{theorem}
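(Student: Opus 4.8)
The plan is to work directly from the trace identity (\ref{trace}) and the definition (\ref{first}) of the average over $\h_g$, separating the point-at-infinity contribution $\lambda_Q$ from the character sum $\sum_{\deg(f)=n}\Lambda(f)\chi_Q(f)$, and then reducing the latter to the monic averages already computed. The one genuinely new feature, compared with the monic family of \cite{Rud10}, is that the polynomials $Q$ parametrizing $\h_g$ need not be monic. So I would sieve with the M\"obius function by writing each candidate $Q=A^2B$ with $A$ monic and $B$ not necessarily monic, and then peel off the leading coefficient by setting $B=a\tilde B$ with $a\in\F_q^*$ and $\tilde B$ monic. This pushes a factor $\left(\frac{a}{f}\right)$ outside the inner sum.

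First I would dispose of the $\lambda_Q$ term. Since $\F_q^*$ contains exactly $(q-1)/2$ squares and $(q-1)/2$ non-squares, the values $\lambda_Q=\pm1$ occur equally often among even-degree squarefree $Q$, so $\langle\lambda_Q^n\rangle_{\h_g}=0$ for $n$ odd; for $n$ even, $\lambda_Q^n=|\lambda_Q|$ selects precisely the even-degree squarefree polynomials, giving $\#\widehat{\f}_{2g+2}/\#\h_g=q/(q+1)$.

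The crux is the evaluation of $\sum_{a\in\F_q^*}\left(\frac{a}{f}\right)$ for $f$ a prime power $P^k$ (the only $f$ surviving the von Mangoldt weight). Using $\left(\frac{a}{P^k}\right)=a^{\frac{q-1}{2}\deg(f)}$, this sum equals $q-1$ when $\deg(f)=n$ is even and $0$ when $n$ is odd. Hence for $n$ odd both the $\lambda_Q$ term and the character sum vanish identically, yielding the \emph{exact} value $\langle\tr(\Theta_{C_Q}^n)\rangle_{\h_g}=0$ for every $g$ --- the surprising exactness advertised in the introduction. For $n$ even the factor $q-1$ turns the not-necessarily-monic inner sum back into $(q-1)$ times the monic sum, so the entire character-sum contribution collapses to $\langle\chi_Q(f)\rangle_{\f_{2g+1}\cup\f_{2g+2}}$. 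Combining this with the matching $\lambda$-contribution $q/(q+1)$ (which is exactly the value of $\langle\lambda_Q^n\rangle$ on the monic family, since a monic even-degree squarefree $Q$ has $\sgn(Q)=\square$) gives the claimed equality $\langle\tr\rangle_{\h_g}=\langle\tr\rangle_{\f_{2g+1}\cup\f_{2g+2}}$.

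Finally I would invoke Corollary \ref{pre_cor} (equivalently, Theorems \ref{pre_thm} and \ref{Thm_Rud}) and simplify. The only real manipulation left is to observe that the $\deg(P)=1$ terms of $\sum_{\deg(P)\mid n/2}\frac{\deg(P)}{|P|+1}$ contribute exactly $q\cdot\frac{1}{q+1}=\frac{q}{q+1}$, which cancels the bias term $-\frac{1}{q^{n/2}}\frac{q}{q+1}$; discarding these terms restricts the sum to $\deg(P)\neq1$, and the remaining cases ($0<n<2g$, $n=2g$, $2g<n$) carry over unchanged. The main obstacle is really just the bookkeeping in the sieving step: tracking the leading coefficient and confirming that the factor $\left(\frac{a}{f}\right)$ is precisely what cleanly separates the exact vanishing for odd $n$ from the reduction to the monic average for even $n$. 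Once that character-sum identity is secured, the rest is a routine substitution into the results already proved.
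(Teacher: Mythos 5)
Your proposal is correct and follows essentially the same route as the paper: the same M\"obius sieve with the leading coefficient peeled off as $B=a\tilde B$, the same key identity $\sum_{a\in\F_q^*}\left(\frac{a}{f}\right)=(q-1)$ or $0$ for prime powers $f$ according to the parity of $\deg(f)$, the same reduction of the even case to $\langle\chi_Q(f)\rangle_{\f_{2g+1}\cup\f_{2g+2}}$, and the same cancellation of the bias term $-\frac{1}{q^{n/2}}\frac{q}{q+1}$ against the $\deg(P)=1$ terms of the sum in Corollary \ref{pre_cor}. Your explicit remark that $\langle\lambda_Q^n\rangle$ takes the same value $q/(q+1)$ on both $\h_g$ and the monic family (because monic even-degree $Q$ has $\sgn(Q)=\square$) is a point the paper leaves implicit, but it is the same argument.
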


In particular,
\begin{corollary}
If $n$ is odd, then
\begin{align*}
\langle \tr(\Theta_{C_Q}^n) \rangle_{\h_g} = \int_{\USp(2g)} \tr(U^n) dU.
\end{align*}
For $n$ even with $3 \log_q(g) < n < 4g-5 \log_q(g)$ and $n \neq 2g$,
\begin{align*}
\langle \tr(\Theta_{C_Q}^n) \rangle_{\h_g} = \int_{\USp(2g)} \tr(U^n) dU + o(\frac{1}{g}).
\end{align*} 
\end{corollary}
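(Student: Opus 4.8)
The plan is to read off the result directly from Theorem \ref{main_thm} and the Random Matrix formula (\ref{RMT}), the only real work being to verify that, in the stated range of $n$, every term appearing in Theorem \ref{main_thm} beyond the leading constant is $o(1/g)$. The odd case needs nothing: Theorem \ref{main_thm} gives $\langle \tr(\Theta_{C_Q}^n)\rangle_{\h_g}=0$ for odd $n$, while (\ref{RMT}) gives $\int_{\USp(2g)}\tr(U^n)\,dU=-\eta_n=0$ since $\eta_n=0$ for odd $n$, so the two sides coincide exactly for every $g$.

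For even $n$ I would split the hypothesis $3\log_q(g)<n<4g-5\log_q(g)$, $n\neq 2g$, at the point $n=2g$. On the subrange $n<2g$ the formula (\ref{RMT}) gives $-\eta_n=-1$, which matches the bracketed constant $-1$ in Theorem \ref{main_thm}; on the subrange $n>2g$ it gives $0$, matching the absence of any bracketed constant there. In either case the problem reduces to showing that the three surviving contributions are each $o(1/g)$: the prime-power sum $\frac{1}{q^{n/2}}\sum_{\deg(P)\mid n/2,\ \deg(P)\neq 1}\frac{\deg(P)}{|P|+1}$, the error $O(gq^{-g/2})$, and (only when $n>2g$) the error $O(nq^{n/2-2g})$.

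The prime-power sum is the one delicate estimate. Grouping the primes by their degree $d$, which must be a divisor of $n/2$ with $d\geq 2$, and using the trivial bound $\pi_q(d)\leq q^d/d$ coming from the identity $q^d=\sum_{e\mid d}e\,\pi_q(e)$, one gets $\sum_{\deg(P)=d}\frac{d}{|P|+1}=\pi_q(d)\frac{d}{q^d+1}<1$ for each such $d$. Hence the whole sum is at most the number of divisors of $n/2$, which is $O_\varepsilon((n/2)^\varepsilon)=O_\varepsilon(g^\varepsilon)$ since $n<4g$. Dividing by $q^{n/2}$ and invoking the lower cutoff $n>3\log_q(g)$, so that $q^{n/2}>g^{3/2}$, this contribution is $O_\varepsilon(g^{\varepsilon-3/2})=o(1/g)$ for any $\varepsilon<1/2$. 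The remaining errors are routine: $gq^{-g/2}$ decays faster than any fixed power of $1/g$; and for $n>2g$ the upper cutoff $n<4g-5\log_q(g)$ gives $n/2-2g<-\tfrac{5}{2}\log_q(g)$, hence $q^{n/2-2g}<g^{-5/2}$ and $nq^{n/2-2g}<4g\cdot g^{-5/2}=o(1/g)$.

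The step I expect to be the main obstacle is precisely the estimate of the prime-power sum, since it is the only place where the logarithmic lower cutoff on $n$ must be used in an essential way: one needs both the per-degree bound and a subpolynomial bound for the divisor count to push $q^{-n/2}$ times a slowly growing sum below $1/g$. The two cutoffs $3\log_q(g)$ and $5\log_q(g)$ are calibrated exactly to force the sum term and the tail error $O(nq^{n/2-2g})$, respectively, below $1/g$, so the argument is tight and leaves little slack; everything else is a matter of substituting the relevant case of Theorem \ref{main_thm} into (\ref{RMT}) and comparing constants.
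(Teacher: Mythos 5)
Your proposal is correct and follows essentially the same route as the paper: substitute Theorem \ref{main_thm} into the Random Matrix formula (\ref{RMT}) and check, term by term, that everything beyond the bracketed constant is $o(\frac{1}{g})$ in the stated range of $n$. The only difference is in bookkeeping for the prime sum: the paper bounds it by $O(\frac{n}{q^{n/2}})$ (trivial divisor count, then decay of $\frac{n}{q^{n/2}}$ past the cutoff, which in fact yields the stronger statement with cutoffs $(2+\epsilon)\log_q(g)$ and $4g-(4+\epsilon)\log_q(g)$), whereas you use the subpolynomial divisor bound $O_\epsilon(n^\epsilon)$ together with $q^{n/2}>g^{3/2}$; both close the argument.
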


\begin{proof}
The first part is clear. To prove the second part, we treat each non-mainterm in Theorem \ref{main_thm} separately and show that each of them contributes an error term of $o(\frac{1}{g})$ in the desired region.

Fix $\epsilon>0$. If $n<4g-(4+\epsilon)\log_q(g)$, then  
\begin{align*}
\lim_{g\rightarrow \infty}g\cdot nq^{\frac{n}{2}-2g} \leq \lim_{g\rightarrow\infty} g^2 g^{-2-\frac{\epsilon}{2}}
=\lim_{g\rightarrow \infty} g^\frac{-\epsilon}{2}=0;
\end{align*}
i.e., 
\begin{align*}
O(nq^{\frac{n}{2}-2g})=o(\frac{1}{g}).
\end{align*}

Note that 
\begin{align*}
\frac{1}{q^\frac{n}{2}}\sum_{\substack{{\deg(P)|\frac{n}{2}}\\{\deg(P)\neq 1}}}\frac{\deg(P)}{|P|+1}=O(\frac{n}{q^\frac{n}{2}}).
\end{align*}
If $n=(2+\epsilon)\log_q(g)$, then 
\begin{align*}
\lim_{g\rightarrow \infty} g \frac{n}{q^\frac{n}{2}} \ll_\epsilon \lim_{g\rightarrow \infty}\frac{g\log_q(g)}{g^{1+\frac{\epsilon}{2}}}=0.
\end{align*}
So, for $n>(2+\epsilon)\log_q(g)$,
\begin{align*}
\frac{1}{q^\frac{n}{2}}\sum_{\substack{{\deg(P)|\frac{n}{2}}\\{\deg(P)\neq 1}}}\frac{\deg(P)}{|P|+1}=O(\frac{n}{q^\frac{n}{2}}) =o(\frac{1}{g}).
\end{align*}
We have actually shown a stronger version of our statement; namely, for any fixed $\epsilon>0$ and for any even $n$ with $(2+\epsilon) \log_q(g) < n < 4g-(4+\epsilon) \log_q(g)$ and $n \neq 2g$,
\begin{align*}
\langle \tr(\Theta_{C_Q}^n) \rangle_{\h_g} = -1 + o(\frac{1}{g}).
\end{align*}
\end{proof}

We now look at another approach which quickly verifies the first result of Theorem \ref{main_thm}. This argument was provided by \text{Dr. Ze$\E$v} Rudnick: fix a finite field $\F_q$ of odd cardinality $q$, let $Q$ be any monic, squarefree polynomial in $\F_q[x]$ of degree $2g+1$ or $2g+2$, and let $a\in \F_q^*$. Then
\begin{align*}
\#C(\F_{q^n})&=\sum_{x_0\in \pp^1(\F_{q^n})} \Big(\chi_n(a(Q(x_0))+1\Big)\\
&=q^n+1+\sum_{x_0\in \pp^1(\F_{q^n})}\chi_n(a(Q(x_0)),
\end{align*}
where $\chi_n$ is a multiplicative character on $\F_{q^n}$ defined by
\begin{align*}
\chi_n(\alpha):=\left\{\def\arraystretch{1.2}%
\begin{array}{@{}r@{\quad}l@{}}
1 & \mbox{if $\alpha$ is a square in $\F_{q^n}^*$}\\
0 & \mbox{if $\alpha=0$}\\
-1 & \mbox{if $\alpha$ is not a square in $\F_{q^n}^*$.}\\
\end{array}\right.
\end{align*}
When $x_0$ is the point at infinity, $Q(x_0)$ is defined by the evaluation of $x^{2g+2}Q(\frac{1}{x})$ at $x=0$; i.e., $\chi_n(Q(\infty))$ yields $\lambda_Q$ according to the count of (\ref{infinity}). Moreover,
\begin{align*}
-q^{\frac{n}{2}}\tr(\Theta_{C}^n)=\sum_{x_0\in \pp^1(\F_{q^n})} \chi_n(aQ(x_0))
\end{align*}

Therefore,
\begin{align*}
\langle \tr(\Theta_{C}^n) \rangle_{\h_g}&=\frac{-1}{q^\frac{n}{2}\#\h_g}\sum_{a\in\F_q^*}\sum_{Q\in\F_q[x]}{'}\sum_{x_0\in \pp^1(\F_{q^n})} \chi_n(aQ(x_0))\\
&=\frac{-1}{q^\frac{n}{2}\#\h_g}\sum_{a\in\F_q^*}\chi_n(a)\sum_{Q\in\F_q[x]}{'}\sum_{x_0\in\pp^1(\F_{q^n})} \chi_n(Q(x_0)),
\end{align*}
where $\sum_{Q\in\F_q[x]}{'}$ indicates that the sum is over all monic, squarefree polynomials $Q\in \F_q[x]$ such that $\deg(Q)=2g+1$ or $\deg(Q)=2g+2$.
When $n$ is odd, there are exactly $\frac{q-1}{2}$ squares and $\frac{q-1}{2}$ non-squares in $\F_q^* \subset\F_{q^n}$, which tells us that 
\begin{align*}
\sum_{a\in\F_q^*}\chi_n(a)=0.
\end{align*}
 So, for odd $n$,
\begin{align*}
\langle \tr(\Theta_{C}^n) \rangle_{\h_g}=0.
\end{align*}
On the other hand, when $n$ is even, computing the average over the entire moduli space reduces to computing the average over the moduli space with the restriction that $a=1$: for even $n$, every element of $\F_q^*$ is a square in $\F_{q^n}^*$ so that 
\begin{align*}
\sum_{a\in\F_q}\chi_n(a)=q-1
\end{align*}
and
\begin{align*}
\langle \tr(\Theta_{C}^n) \rangle_{\h_g}&=-\frac{(q-1)}{q^\frac{n}{2}\#\h_g}\sum_{Q\in\F_q[x]}{'}\sum_{x_0\in\pp^1(\F_{q^n})} \chi_n(Q(x_0))\\
&=\langle \tr(\Theta_{C}^n) \rangle_{\widehat{\h_g}},
\end{align*}
where 
\begin{align*}
\widehat{\h_g}:=\{C_Q\in\h_g : \mbox{$Q$ monic}\}.
\end{align*}
Evidently, $\#\h_g=(q-1)\#\widehat{\h_g}$.

\section{Acknowledgements}

We thank \text{Dr. Chantal} David for many valuable suggestions which have vastly improved this paper and for her continuous support throughout this research. We also thank \text{Dr. Ze$\E$v} Rudnick and Manal Alzahrani for their valuable input and insight.

\end{document}